\numberwithin{equation}{section}
\theoremstyle{plain}
\newtheorem{thm}{Theorem}[section]
\def\P{\mathbb{P}}
\def\E{\mathbb{E}}
\def\N{\mathbb{N}}
\def\11{\mathbbm{1}}
\newtheorem{proposition}[thm]{Proposition}
\newtheorem{lemma}[thm]{Lemma}
\theoremstyle{definition}
\numberwithin{equation}{section}
\newcommand{\pk}{\prime (k)}
\renewcommand{\1}{\mathbf{1}}
\newcommand{\tY}{\tilde{Y}}
\newcommand{\tZ}{\tilde{Z}}
\newcommand{\tsigma}{\tilde{\sigma}}
\newcommand{\ttau}{\tilde{\tau}}
\newcommand{\K}{\mathscr{K}}
\newcommand{\Z}{\mathbb{Z}}
\newcommand{\I}{\text{I}}
\newcommand{\II}{\text{II}}
\newcommand{\A}{\mathscr{A}}
\newcommand{\llim}[1]{\lim\limits_{#1 \rightarrow \infty}}
\newcommand{\lliminf}[1]{\liminf\limits_{#1 \rightarrow \infty}}
\begin{document}

\begin{frontmatter}
\title{Three favorite sites occurs infinitely often for one-dimensional simple random walk}
\runtitle{Favorite sites of random walks}


\begin{aug}
\author{\fnms{Jian} \snm{Ding}\thanksref{t1}\ead[label=e1]{jianding@galton.uchicago.edu}} \and
\author{\fnms{Jianfei} \snm{Shen}\thanksref{t1}\ead[label=e2]{jfshen@galton.uchicago.edu}}

\thankstext{t1}{Partially supported by NSF grant DMS-1455049 and an Alfred Sloan fellowship.}
\runauthor{J. Ding and J. Shen}

\affiliation{University of Chicago}

\address{Department of Statistics, \\
The University of Chicago,\\
 Chicago, IL 60637\\
\printead{e1} \\ \phantom{E-mail:\ }\printead*{e2}}
\end{aug}

\begin{abstract}
For a one-dimensional simple random walk $(S_t)$,  for each time $t$ we say a site $x$ is a favorite site if it has the maximal local time. In this paper, we show that with probability 1 three favorite sites occurs infinitely often. Our work is inspired by T\'oth (2001), and disproves a conjecture of Erd\"{o}s and R\'ev\'esz (1984) and of T\'oth (2001).
\end{abstract}

\begin{keyword}[class=MSC]
\kwd{60J15, 60J55.}
\end{keyword}

\begin{keyword}
\kwd{random walk}
\kwd{favorite sites.}
\end{keyword}

\end{frontmatter}

\section{Introduction}

Let $S_{t},~t\in \N$ be a one-dimensional simple random walk with $S_0=0$. We define the local time at $x$ by time $t$ to be
$
L(t,x)=\#\{0<k\leq t: S_k=x\}.
$
At time $t$, we say $x$ is a favorite site if it has the maximal local time, i.e., $L(t, x) = \max_y L(t, y)$, and we say that \emph{three favorite sites} occurs if there are exactly three sites which achieve the maximal local time.
 Our main result states that
\begin{thm}\label{mainthm}
For one-dimensional simple random walk, with probability 1  three favorite sites occurs infinitely often.
\end{thm}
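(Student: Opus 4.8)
The plan is to reduce Theorem~\ref{mainthm} to a \emph{conditional} second Borel--Cantelli estimate along a sequence of stopping times, and then to establish that estimate by combining the Ray--Knight description of the local time profile with an explicit construction of a configuration carrying three tied favorite sites.

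\emph{Reduction.} Let $(\mathcal F_t)$ be the filtration generated by $(S_t)$, and for $k\ge 1$ set $T_k:=\inf\{t:\max_x L(t,x)=2^k\}$, the first time the maximal local time reaches $2^k$; these are finite stopping times with $T_k\uparrow\infty$. Let $A_k$ be the event that three favorite sites occurs at some time in $[T_k,T_{k+1})$. It suffices to prove that there is a constant $c>0$ such that $\P(A_k\mid\mathcal F_{T_k})\ge c$ almost surely for all $k$. Indeed, since $A_k\in\mathcal F_{T_{k+1}}$, L\'evy's extension of the Borel--Cantelli lemma gives that, almost surely, $A_k$ occurs for infinitely many $k$ if and only if $\sum_k\P(A_k\mid\mathcal F_{T_k})=\infty$; the latter holds surely once $\P(A_k\mid\mathcal F_{T_k})\ge c>0$, so three favorite sites occurs infinitely often almost surely. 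Observe also that at $T_k$ there is a \emph{unique} favorite site $F_k=S_{T_k}$, since the maximal local time increases in unit steps; hence the relevant configurations must be found strictly inside the epochs $[T_k,T_{k+1})$.

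\emph{The key estimate.} Working conditionally on $\mathcal F_{T_k}$, I would invoke the strong Markov property together with the classical Ray--Knight theorem for simple random walk: the local time profile accumulated during the epoch $[T_k,T_{k+1})$ in a region away from the origin behaves, to leading order, like a critical Galton--Watson process run from boundary data comparable to the running maximum. The goal is then to engineer, with probability bounded below uniformly in $k$ and in the configuration at time $T_k$, a prescribed finite pattern of excursions under which (i) three sites simultaneously attain a common integer local-time value equal to the running maximum --- with the third of them reaching it last --- while (ii) every other site still carries strictly smaller local time at that instant. On this event exactly three sites are favorite, so $A_k$ holds. Bounding its probability below calls for a quantitative, scale-uniform version of the Ray--Knight ``shape'' estimate (to know that the coarse profile has the right height with positive probability), the usual hitting-probability estimates controlling how long the walk can avoid disturbing the configuration, and a union bound excluding the spoiling events --- a fourth site catching up, or a designated site overshooting, before the triple tie is realized.

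\emph{Main obstacle.} I expect the construction together with the uniform lower bound to be the crux of the argument. The essential difficulty is that, in contrast to a continuous limiting profile, forcing three \emph{integer-valued} local times to be exactly equal is not a generic event: neighboring site local times are rigidly coupled through the edge-crossing counts, so a triple tie cannot be read off from a limiting diffusion but must instead be produced by a carefully chosen deterministic excursion pattern whose combinatorics forces the three target counts to agree. This arithmetic is precisely what separates ``three favorite sites'' --- which this theorem asserts is recurrent --- from the Erd\H{o}s--R\'ev\'esz and T\'oth picture of at most two favorite sites eventually. A secondary difficulty is uniformity: the branching-process approximation must be controlled by estimates that do not degrade as the scale $2^k$ grows, and these must be meshed with random-walk excursion estimates over an entire epoch. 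Granting the uniform conditional lower bound, the reduction above finishes the proof.
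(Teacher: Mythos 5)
Your reduction is not what the paper does, and more importantly you leave the decisive step unproved. Let me explain both points.

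The paper's proof has three legs. First, it restricts attention to instances of \emph{three consecutive} favorite sites reached at the inverse upcrossing time $T_U(k+1,x)$ with $L(\cdot,x)=h$ and $k\in I_h$, and defines $N_H$ to be the number of such instances with $h\le H$. Using the Ray--Knight representation together with martingale/optional-stopping estimates for the branching processes $Y,Z$, it shows $\E N_H \ge c\log H$. Second --- and this is the paper's main novelty --- it shows $\E N_H^2 = O(\log H)\,\E N_H$. The mechanism is a local perturbation map $\varphi_x$ that pushes down the edge local times at $x+1,x+2$: given the event $B_x(\ell)$ producing a triple at scale $h$, there are order-$h$ disjoint perturbations $B_x(\ell^*)$ with comparable probability which can \emph{also} be extended to the second triple $B_{x'}(\ell')$, so the joint probability is a factor $O(h^{-1})$ smaller than the product might suggest. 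This gives near-independence without ever establishing a pointwise conditional probability bound. Paley--Zygmund then yields $\P(N=\infty)\ge\delta>0$. Third, the paper upgrades to probability $1$ via a $0$--$1$ law: using the transience of favorite sites from Bass--Griffin, the event $\{f(3)=\infty\}$ is tail measurable, so Kolmogorov's $0$--$1$ law applies.

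Your approach requires the uniform estimate $\P(A_k\mid\mathcal F_{T_k})\ge c$ \emph{almost surely}, i.e.\ uniformly over every possible local-time configuration at the moment $T_k$. This is a strictly stronger statement than anything the paper proves, and you yourself flag it as the ``crux.'' The gap is not cosmetic: the conditional probability of engineering a triple tie before the next dyadic scale could in principle degrade on adversarial configurations (for instance, configurations with an isolated sharp peak and low local times elsewhere), and nothing you say controls this uniformly. The Ray--Knight theorem does not directly describe the law of the profile conditioned on an arbitrary $\mathcal F_{T_k}$ --- it applies at inverse edge-local-time stopping times, not at your $T_k$ --- so even the first step of your ``key estimate'' is not set up. The paper's moment method sidesteps exactly this difficulty: it never needs a uniform conditional bound, only control of first and second moments, which turns out to be tractable because the correlation between two triples is suppressed by the perturbation count. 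So what you have is a plausible \emph{reduction} to a (harder, possibly false) lemma, not a proof. If you want to pursue your route you would need to either prove the uniform conditional bound (which seems to require genuinely new ideas) or, as the paper does, weaken the goal to positive probability and then invoke the transience-of-favorite-sites $0$--$1$ law to finish.
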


Theorem~\ref{mainthm} complements the result in  \cite{toth2001no} which showed that there are no more than three favorite sites eventually, and disproves a conjecture of Erd\"{o}s and R\'ev\'esz \cite{ER84,ER87,ER91} and of \cite{toth2001no}. Previous to \cite{toth2001no}, it was shown in \cite{TW97} that eventually there are no more than three favorite edges.

 Besides the number of favorite sites, the asymptotic behavior of favorite sites have been much studied (see \cite{ST00} for an overview): at time $n$ as $n\to \infty$, it was shown in \cite{BG85,LS04} that the distance between the favorite sites and the origin in the infimum limit sense  is about $\sqrt{n}/\mathrm{poly}(\log n)$ while in the supremum limit sense is  about $\sqrt{2n\log\log n}$; it was proved in \cite{CS98} that the distance between the edge of the range of random walk and the set of favorites increases as fast as $\sqrt{n}/(\log \log n)^{3/2}$; in \cite{CRS00} the jump size for the position of favorite site was studied and shown to be as large as $\sqrt{2n\log\log n}$; a number of other papers \cite{Eis97,BES00, Marcus01, HS00, EK02, HS15, CDH16} studied similar questions in broader contexts including symmetric stable processes, random walks on random environments and so on.

In two dimensions and higher, favorite sites for simple random walks have been intensively studied where some intriguing fractal structure arise, see, e.g., \cite{DPRZ01, Dembo05, Abe15, Okada}. Such fractal structure also plays a central role in the study of cover times for random walks, see, e.g., \cite{DPRZ04, Belius2016, Belius13}. We refrain from an extensive discussion on the literature on this topic as the mathematical connection to the concrete problem considered in the present article is limited. That being said, we remark that analogous questions on the number of favorite sites in two dimensions and higher are of interest for future research, which we expect to be more closely related to the literature mentioned in this paragraph as well as references therein.

Our proof is inspired by \cite{toth2001no}, which in turn was inspired by \cite{TW97}. Following \cite{toth2001no},
 we define the number of upcrossings and downcrossings at $x$ by the time $t$ to be
\begin{align*}
&U(t,x)=\#\{0<k\leq t: S_k=x,~S_{k-1}=x-1\}, \\
&D(t,x)=\#\{0<k\leq t:S_k=x,~S_{k-1}=x+1\}.
\end{align*}
It is elementary to check that (see, e.g, \cite[Equation (1.6)]{toth2001no})
\begin{align}\label{L}
\begin{split}
L(t,x)=& D(t,x)+D(t,x-1)+\1_{\{0<x\leq S(t)\}}-\1_{\{S(t)<x\leq 0 \}} \\
=&U(t,x)+U(t,x+1)+\1_{\{S(t)\leq x<0\}}-\1_{\{0\leq x <S(t) \}}.
\end{split}
\end{align}
The set of favorite (or most visited) sites $\K(t)$ of the random walk at time $t\in \N$  consists of
those sites where the local time attains its maximum value, i.e.,
\begin{align*}
\K(t)=\left\{y\in \Z:~L(t,y)=\max_{z\in \Z} L(t,z) \right\}.
\end{align*}

For $r\geq 1$, let $f(r)$ be the (possibly infinite) number of times when the currently occupied site is one of the $r$ favorites:
\begin{align*}
f(r)=\#\{t\geq 1: ~S_t\in \K(t),~ \#\K(t)=r\}.
\end{align*}
We remark that  one of the main conceptual contributions in \cite{toth2001no, TW97} is the introduction of this function $f(r)$. Effectively, $f(r)$ counts the clusters of instances for $r$ favorite sites; it is plausible that after the random walk leaves one of the favorite sites, within a non-negligible (random) number of steps those $r$ favorite sites will remain favorite sites. Therefore, the expectation of $f(r)$ is significantly smaller than the expected number of $t$ at which $r$ favorite sites occurs, and in fact it was shown in \cite{toth2001no} that $\E f(r) < \infty$ for all $r\geq 4$. It was then conjectured in \cite{toth2001no} that $f(3) <\infty$ with probability 1,  even though from the computations in \cite{toth2001no} it was clear that  $\E f(3) = \infty$.
In the current article, we will show, using the idea of counting clusters in \cite{toth2001no}, that the correlation becomes so small that the first moment dictates the behavior. That is to say, we will show that
\begin{equation}\label{eq-main-thm}
f(3)=\infty  \mbox{ with probability 1},
\end{equation}
which then yields  Theorem~\ref{mainthm}.

The rest of the paper is organized as follows: in Section~\ref{sec:prelim} we will set up the framework of our
proof following \cite{toth2001no}; in Section~\ref{sec:main} we first show that $f(3) = \infty$ with positive probability and then prove \eqref{eq-main-thm} by demonstrating a $0$-$1$ law. We emphasize that the first moment computation in Subsection~\ref{sec:firstmoment} follows from arguments in \cite{toth2001no}, and the main novelty of our work is on the second moment computation in Subsection~\ref{sec:secondmoment}.

\medskip

\noindent {\bf Acknowledgement.} We thank Yueyun Hu and Zhan Shi for introducing the problem on favorite sites and for interesting discussions, and we thank Steve Lalley and B\'alint T\'oth for many helpful discussions and useful comments for an early version of the manuscript.

\section{Preliminaries} \label{sec:prelim}
In this section, we recall the framework of \cite{toth2001no} with suitable adaption to our setup, and collect a number of useful and well-understood facts. We claim no originality in this section, and the existence of the current section is mainly for the completeness of notation and definition.

\subsection{Three consecutive favorite sites}
 It turns out that in order to show $f(3) =\infty$ it suffices to consider instances of three favorite sites which are consecutive. To this end, we define the inverse edge local times by
\begin{align*}
&T_U(k,x)\triangleq \inf\{t\geq 1:U(t,x)=k\}  \mbox{ and } T_D(k,x)\triangleq \inf\{t\geq 1:D(t,x)=k\}.
\end{align*}
We consider the events of three consecutive favorite sites, i.e.,
$$A_{x,h}^{(k)}\triangleq \{\K(T_U(k+1,x))=\{x,x+1,x+2\},~ L(T_U(k+1,x),x)=h \}\,.$$
We write the events in $T_U(k+1,x)$ rather than $T_U(k,x)$ as it matches the form of the Ray-Knight representation which we will discuss later.  We then let $I_h=(\frac{1}{2}(h+\sqrt{h}), \frac{1}{2}(h+2\sqrt{h}))$ and define
\begin{align*}
N_H=\sum_{h=1}^H \sum_{k\in I_h} \sum_{x=1}^\infty \1_{A_{x,h}^{(k)}} \quad \text{ and } \quad
N=\llim{H} N_H = \sum_{h=1}^\infty \sum_{k\in I_h} \sum_{x=1}^\infty \1_{A_{x,h}^{(k)}} \,.
\end{align*}
We observe that for each $h$, the events $A_{x, h}^{(k)}$ are mutually disjoint. In addition, we have that $f(3) \geq u(x)$ where
\begin{align*}
u(x)=& \sum_{t=1}^\infty \1_{\{S(t-1)=x-1, ~S(t)=x, ~x\in \K(t), ~\#\K(t)=3\}} \\
= &\sum_{k=1}^\infty \1_{ \{ x\in \K(T_U(k,x)),~ \# \K(T_U(k,x))=3 \} } \\
=& \sum_{k=0}^\infty \sum_{h=1}^\infty \1_{\{x\in \K(T_U(k+1),x),~ \#\K(T_U(k+1,x))=3,~ L(T_U(k+1,x),x)=h\}}\,.
\end{align*}
Therefore, we have that $f(3) \geq N$, and thus it suffices to show that $N  = \infty$. We remark that the preceding discussions are extracted from decompositions in \cite[(2.3), (2.4), (2.5)]{toth2001no}, and they are the starting point for all computations in \cite{toth2001no} as well as the present article.

\subsection{Additive processes and the Ray-Knight representation}
 Throughout this paper we denote by $Y_t$ a critical Galton-Watson branching process with geometric offspring distribution and by $Z_t$, $R_t$ critical geometric branching processes with one immigrant in each generation (in different ways). More precisely, we let $X_{t, i}$'s be i.i.d.\ geometric variables with mean 1 and recursively define
\begin{equation}\label{eq-Z}
Z_{t+1} = \mbox{$\sum_{i=1}^{Z_t+1}$} X_{t,i} \mbox{ and } R_{t+1}=1+\mbox{$\sum_{i=1}^{R_t}$} X_{t,i} \,.
\end{equation}
One can verify that $Y_t$, $Z_t$ and $R_t$ are Markov chains with state space $\Z_+$ and transition probabilities:
\begin{align}\label{pi}
\begin{split}
\P(Y_{t+1}=j|Y_t=i)=&\pi(i,j)
\triangleq
\begin{cases}
\delta_0(j), & \text{if } i=0, \\
2^{-i-j}~\frac{(i+j-1)!}{(i-1)!~j!}, & \text{if } i>0, \\
\end{cases}
\end{split} \\
\P(Z_{t+1}=j|Z_t=i)=&\rho(i,j)\triangleq \pi(i+1,j) \nonumber\\
 \mbox{ and } \quad
\P(R_{t+1}=j|R_t=i)=&\rho^*(i,j)\triangleq \pi(i,j-1) \,. \nonumber
\end{align}
Let $k\geq 0$ and $x$ be fixed integers. When $x\geq 1$, define the following three processes:
\begin{enumerate}
\item $(Z_t^{(k)})_{t\geq 0}$, is a Markov chain with transition probability $\rho(i,j)$ and initial state $Z_0=k$.

\item $(Y_t^{(k)})_{t\geq -1}$, is a Markov chain with transition probabilities $\pi(i,j)$ and initial state $Y_{-1}=k$.

\item $(Y_t^{\prime(k)})_{t\geq 0}$, is a Markov chain with transition probabilities $\pi(i,j)$ and initial state $Y_{0}^{\pk}=Z_{x-1}^{(k)}$.

\end{enumerate}
The three processes are independent, except for the fact that $Y_t^{\pk}$ starts from the terminal state of $Z_t^{(k)}$.
We patch the three processes together to a single process:
$$
\Delta_{x}^{(k)}(y) \triangleq \begin{cases}
Z_{x-1-y}^{(k)}, & \text{if } 0\leq y\leq x-1,
 \\
Y_{y-x}^{(k)}, &\text{if } x-1\leq y\leq \infty,
\\
Y_{-y}^{\prime(k)}, &\text{if } -\infty< y\leq 0.
\end{cases}
$$
We also define
\begin{align}\label{Lambda}
\Lambda_{x}^{(k)} (y) \triangleq \Delta_{x}^{(k)} (y)+\Delta_{x}^{(k)}(y-1)+\1_{\{0<y\leq x \}}\,.
\end{align}
From the Ray-Knight Theorems on local time of simple random walks on $\Z$ (c.f. \cite[Theorem 1.1]{knight1963random}), it follows that for any integers $x\geq 1$ and $k\geq 0$,
\begin{align}\label{RKdcr}
(D(T_U(k+1,x),y),~y\in \Z )\stackrel{law}{=} (\Delta_{x}^{(k)}(y),~y\in \Z).
\end{align}
Using \eqref{L}, \eqref{Lambda} and \eqref{RKdcr}, we get
\begin{align}\label{RKrep}
(L(T_U(k+1,x),y),~y\in \Z)\stackrel{law}{=} (\Lambda_{x}^{(k)}(y),~ y\in \Z).
\end{align}
Similarly, when $x\leq 0$, we define the processes
\begin{enumerate}
\item $(R_t^{(k)})_{t\geq 0}$, is a Markov chain with transition probability $\rho^*(i,j)$ and initial state $R_{-1}=k$.

\item $(Y_t^{(k)})_{t\geq 0}$, is a Markov chain with transition probabilities $\pi(i,j)$ and initial state $Y_{0}=k$.

\item $(Y_t^{\prime(k)})_{t\geq -1}$, is a Markov chain with transition probabilities $\pi(i,j)$ and initial state $Y_{-1}^{\pk}=R_{-1-x}^{(k)}$.

\end{enumerate}
In this case, we patch the three processes together by
\begin{align*}
\Delta_{x}^{(k)}(y) \triangleq \begin{cases}
Y^{\prime(k)}_{y} , & \text{if } -1\leq y<\infty ,
 \\
R_{y-x}, &\text{if } x-1\leq y\leq -1,
\\
Y_{x-1-y}^{(k)}, &\text{if } -\infty<y\leq x-1.
\end{cases}
\end{align*}
The corresponding $\Lambda_{x}^{(k)}$ is defined by
$$\Lambda_{x}^{(k)} (y) \triangleq \Delta_{x}^{(k)} (y)+\Delta_{x}^{(k)}(y-1)-\1_{\{x<y\leq 0\}}\,.$$ By classical Ray-Knight Theorems, we get the couplings for the case $k\geq 0$, $x\leq 0$:
\begin{align}
(D(T_U(k+1,x),y),~y\in \Z )\stackrel{law}{=}& (\Delta_{x}^{(k)}(y),~y\in \Z), \label{RKdcr2} \\
(L(T_U(k+1,x),y),~y\in \Z )\stackrel{law}{=}& (\Lambda_{x}^{(k)}(y),~ y\in \Z). \label{RKrep2}
\end{align}
In this paper, we will mainly use the Ray-Knight representation \eqref{RKdcr} and \eqref{RKrep}, while \eqref{RKdcr2} and \eqref{RKrep2} will be used in the calculation of $\E N_H^2$. In the following, we default $x>0$ unless mentioned otherwise.

\subsection{Three favorite sites under Ray-Knight representation}
To utilize $(\ref{RKrep})$, given the additive processes $Y_t^{(k)}$, $Z_t^{(k)}$ and $Y_t^{\pk}$, we define
$$
\tZ_t^{(k)} \triangleq Z_t^{(k)}+Z_{t-1}^{(k)}+1, \qquad  \tY_t^{(k)} \triangleq Y_t^{(k)}+Y_{t-1}^{(k)},
\qquad \tY^{\pk}_t\triangleq Y^{\pk}_t+Y^{\pk}_{t-1}.
$$
For $h\in \Z_+$, define the first hitting time of $[h,\infty)$ for $Y_t^{(k)}$ and $Z_t^{(k)}$ to be $\sigma^{(k)}_h$ and $\tau^{(k)}_h$ respectively and the extinction time of $Y_t^{(k)}$ to be $\omega^{(k)}$. That is,
\begin{equation}\label{stoppingtime}
\begin{split}
\sigma_h^{(k)} \triangleq& \inf\{ t\geq 0:~ Y_t^{(k)}\geq h\},
\quad \tau_h^{(k)} \triangleq \inf\{t\geq 0:~ Z_t^{(k)}\geq h\}, \\
\quad & \mbox{ and } \omega^{(k)}=\inf\{t\geq 0:~ Y_t^{(k)} =0\}.
\end{split}
\end{equation}
Correspondingly, we define the first hitting time of $[h,\infty)$ for the process $\tY_t^{(k)}$ and $\tZ_t^{(k)}$ to be
$\tsigma_{h}^{(k)}$ and $\ttau_{h}^{(k)}$ respectively. Namely,
\begin{align*}
\tsigma_h^{(k)} \triangleq& \inf\{ t\geq 0:~ \tY_t^{(k)}\geq h\}, \qquad \ttau_h^{(k)} \triangleq \inf\{t\geq 0:~ \tZ_t^{(k)}\geq h\}  \,.
\end{align*}
Using the notation above, we can write $\P(A_{h,x}^{(k)})$ in its Ray-Knight representation form. That is, $\P(A_{h,x}^{(k)})$ is equal to
\begin{align*}
& \P\big(Y_0^{(k)}=h-k-1, Y_1^{(k)}= k+1,~ Y_2^{(k)}=h-k-1, \{\tY_t^{(k)}<h, \mbox{ for }t\geq 3\}, \\
&\qquad \{\tZ_t^{(k)}<h, \mbox{ for }1\leq t\leq x-1\},~\{\tY_t^{\prime(k)}<h, \mbox{ for }t\geq 1\} \big) \,.
\end{align*}

For all the notations above, when the initial state of a process is obvious, we omit the superscript ``$(k)$'' to avoid
 cumbersome notations. We will also use conditional probability $\P(\cdot~|~Y_0=k)$ to indicate the initial state.

 \subsection{Standard lemmas}

 In this subsection we record a few well-understood lemmas that will be useful later.

\begin{lemma}\cite[(6.14) -- (6.15)]{toth2001no}\label{overshooting}
For any $0\leq k\leq h\leq u$ the following overshoot bounds hold:
\begin{align*}
\P\left( Y_{\sigma_h} \geq u \big|~ Y_0=k,~\sigma_h<\infty \right) \leq& \P(Y_1\geq u|~Y_0=h,~ Y_1\geq h)\,, \\
\P\left( Z_{\tau_h} \geq u \big|~ Z_0=k \right) \leq &\P( Z_1\geq u |~Z_0=h,~Z_1\geq h)\,.
\end{align*}
\end{lemma}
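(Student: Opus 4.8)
The idea is that the overshoot of a branching process past level $h$ is stochastically largest when the process arrives "from as high as possible," i.e.\ exactly at $h$ having just jumped there, so starting from a lower value $k\le h$ can only help. Concretely, I would prove both inequalities by a coupling argument combined with the strong Markov property at the first hitting time $\sigma_h$ (resp.\ $\tau_h$).

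First I would record the monotonicity structure of the chains $(Y_t)$ and $(Z_t)$: since the transition kernel $\pi(i,\cdot)$ (and hence $\rho(i,\cdot)=\pi(i+1,\cdot)$) is stochastically monotone in $i$ — a sum of $i$ (or $i+1$) i.i.d.\ geometric variables is stochastically increasing in $i$ — there is a monotone coupling, and in particular the law of $Y_{t+1}$ given $Y_t=i$ is stochastically increasing in $i$. Next, condition on $\{\sigma_h<\infty\}$ and on the value $Y_{\sigma_h-1}=i$ with $i\le h-1$ (the last state before crossing). The event $\{Y_{\sigma_h}\ge u\}$ is then $\{Y_{1}\ge u\mid Y_0=i, Y_1\ge h\}$ by the Markov property. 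So it suffices to show $\P(Y_1\ge u\mid Y_0=i,\,Y_1\ge h)\le \P(Y_1\ge u\mid Y_0=h,\,Y_1\ge h)$ for every $i\le h\le u$. This is a one-step statement: writing $Y_1=\sum_{j=1}^{Y_0}X_j$ (with the convention appropriate to $\pi$), I want the conditional-on-exceeding-$h$ law to be stochastically increasing in the number of summands. This follows because adding more independent nonnegative summands stochastically increases the sum, and conditioning on $\{\text{sum}\ge h\}$ preserves this ordering; one can make it rigorous by a direct coupling of $\sum_{j\le i}X_j$ and $\sum_{j\le h}X_j = \sum_{j\le i}X_j + \sum_{i<j\le h}X_j$, noting that if the smaller sum is $\ge h$ then so is the larger, and on the complementary event the larger sum conditioned to be $\ge h$ dominates. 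Averaging over $i=Y_{\sigma_h-1}\le h-1$ gives the first display. The second display for $Z_t$ (hence $\tau_h$) is identical with $Z_1=\sum_{j=1}^{Z_0+1}X_j$, and here one does not even need a conditioning event other than $Z_1\ge h$ implicitly through $\tau_h$; the same monotone coupling in the number of summands $Z_0+1$ works.

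The main obstacle — really the only subtlety — is handling the conditioning: plain stochastic monotonicity of $Y_1$ in $Y_0$ does not immediately survive conditioning on the event $\{Y_1\ge h\}$, since conditioning can in principle reverse orderings. The clean way around this is the explicit additive coupling above: construct both $\sum_{j\le i}X_j$ and $\sum_{j\le h}X_j$ on the same probability space with the first a sub-sum of the second; then $\{\sum_{j\le i}X_j\ge h\}\subseteq\{\sum_{j\le h}X_j\ge h\}$, and on this coupling the larger sum is pointwise $\ge$ the smaller, so after conditioning each to lie in $[h,\infty)$ the stochastic domination is preserved (this is the standard fact that if $V\ge W$ a.s.\ and $\{W\ge h\}\subseteq\{V\ge h\}$ then $\mathrm{Law}(V\mid V\ge h)$ stochastically dominates $\mathrm{Law}(W\mid W\ge h)$). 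Everything else is bookkeeping with the strong Markov property, so the lemma follows. I note that since this is quoted verbatim from \cite[(6.14)--(6.15)]{toth2001no}, one may alternatively simply cite that reference; the argument above is the natural self-contained proof.
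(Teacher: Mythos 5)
The paper itself does not prove this lemma; it simply cites \cite[(6.14)--(6.15)]{toth2001no}, so your self-contained argument is going beyond what the paper does. Unfortunately it has a genuine gap at the one step that actually carries the content. The ``standard fact'' you invoke --- that $V\ge W$ almost surely (with $\{W\ge h\}\subset\{V\ge h\}$) implies $\mathrm{Law}(V\mid V\ge h)$ stochastically dominates $\mathrm{Law}(W\mid W\ge h)$ --- is false, even with the additive independent structure you use. Counterexample: let $W$ take values $0$ and $10$ each with probability $1/2$, let $X$ be independent of $W$ taking values $0$ and $5$ each with probability $1/2$, and set $V=W+X$. With $h=5$, $u=10$ one has $\P(W\ge 10\mid W\ge 5)=1$ while $\P(V\ge 10\mid V\ge 5)=(1/2)/(3/4)=2/3<1$. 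Pointwise domination, even of the additive form $V=W+X$ with $X\ge 0$ independent, does not survive conditioning on a half-line, so the sentence ``after conditioning each to lie in $[h,\infty)$ the stochastic domination is preserved'' is exactly where the proof breaks.

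What actually makes the lemma true is a distributional property of the kernel: $\pi(i,j)=2^{-i-j}\binom{i+j-1}{j}$ has the monotone likelihood ratio (equivalently, $\mathrm{TP}_2$) property. For $i_1<i_2$ and $j_1<j_2$ the ratio $\pi(i_1,j_1)\pi(i_2,j_2)/\big(\pi(i_1,j_2)\pi(i_2,j_1)\big)$ reduces, after the powers of $2$ and the $(i-1)!$, $j!$ factors cancel, to $(i_1+j_1-1)!\,(i_2+j_2-1)!\,/\big((i_1+j_2-1)!\,(i_2+j_1-1)!\big)\ge 1$, which follows by pairing factors. MLR is precisely the strengthening of stochastic monotonicity that is stable under conditioning on any event $\{Y_1\in A\}$, so it gives $\mathrm{Law}(Y_1\mid Y_0=i,\,Y_1\ge h)\preceq\mathrm{Law}(Y_1\mid Y_0=h,\,Y_1\ge h)$ for $i\le h$. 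The rest of your decomposition --- conditioning on $\sigma_h<\infty$ and on $Y_{\sigma_h-1}=i$, invoking the Markov property, and averaging over $i$ --- is sound bookkeeping, and the $Z$ case is identical via $\rho(i,j)=\pi(i+1,j)$. You should replace the coupling-and-conditioning step by this MLR computation (or, as you note, simply cite T\'oth).
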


\begin{lemma}\label{tranprob}
We have that
\begin{enumerate}[(i)]
\item For $i,j\in \left(\frac{1}{2}(h-10\sqrt{h}),~\frac{1}{2}(h+10\sqrt{h}) \right)$, there exist positive constants $c$ and $C$ such that $c~h^{-\frac{1}{2}}\leq \pi(i,j)\leq C~h^{-\frac{1}{2}}$ for all $h\geq 1$.
\item For $i+j=h$, $\pi(i,j)\leq O(1) ~ h^{-\frac{1}{2}}$.
\item For $j<i_1<i_2$, $\pi(i_1,j)>\pi(i_2,j)$.
\end{enumerate}
\end{lemma}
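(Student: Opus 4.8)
The claims in Lemma~\ref{tranprob} are all elementary estimates on the explicit transition kernel $\pi(i,j) = 2^{-i-j}\binom{i+j-1}{j}$ for $i>0$, and the plan is to reduce each to a standard computation with binomial coefficients and Stirling's formula.

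\textbf{Part (iii).} The plan is purely algebraic. Writing the ratio
\[
\frac{\pi(i+1,j)}{\pi(i,j)} = \frac{1}{2}\cdot\frac{(i+j)!}{i!}\cdot\frac{(i-1)!}{(i+j-1)!} = \frac{1}{2}\cdot\frac{i+j}{i},
\]
one sees this is $<1$ precisely when $j<i$. Hence for $j < i_1 < i_2$ the map $i\mapsto\pi(i,j)$ is strictly decreasing on integers $\geq j+1$, giving $\pi(i_1,j)>\pi(i_2,j)$. I expect no obstacle here.

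\textbf{Parts (i) and (ii).} For (ii), with $i+j=h$ one has $\pi(i,j) = 2^{-h}\binom{h-1}{j}\le 2^{-h}\binom{h-1}{\lfloor (h-1)/2\rfloor}$, and the central binomial bound $\binom{n}{\lfloor n/2\rfloor}\le O(1)\,2^n/\sqrt{n}$ (from Stirling, or the cruder $\binom{2m}{m}\le 4^m/\sqrt{\pi m}$) yields $\pi(i,j)\le O(1)\,h^{-1/2}$; the edge cases where $h$ is small or $j$ is extreme are absorbed into the constant. For (i), I would apply Stirling's formula to $(i+j-1)!$, $(i-1)!$, and $j!$ directly: since $i,j$ both lie in $\tfrac12(h\pm 10\sqrt h)$, we have $i+j = h + O(\sqrt h)$, and each of $i$, $j$, $i+j$ is $\Theta(h)$, so the Stirling error terms are uniformly controlled. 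The leading term is
\[
2^{-i-j}\,\frac{(i+j-1)!}{(i-1)!\,j!} \;\asymp\; \frac{1}{\sqrt{h}}\cdot\frac{(i+j)^{i+j}}{i^{i}\,j^{j}}\,2^{-i-j},
\]
and writing $i = \tfrac{i+j}{2}(1+\delta)$, $j=\tfrac{i+j}{2}(1-\delta)$ with $\delta = O(1/\sqrt h)$, the exponential factor $\left(\tfrac{i+j}{2i}\right)^i\left(\tfrac{i+j}{2j}\right)^j = \big((1+\delta)^{-(1+\delta)}(1-\delta)^{-(1-\delta)}\big)^{(i+j)/2} = e^{-\delta^2(i+j)/2 + O(\delta^4 h)}$, which is bounded above and below by positive constants since $\delta^2 h = O(1)$. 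This pins $\pi(i,j)$ between $c\,h^{-1/2}$ and $C\,h^{-1/2}$.

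\textbf{Main obstacle.} The only delicate point is making the Stirling estimates in (i) genuinely \emph{uniform} in $h$ down to $h=1$ and uniform over the whole window $i,j\in\tfrac12(h\pm10\sqrt h)$ — in particular verifying that the Gaussian-type factor $e^{-\delta^2(i+j)/2}$ is bounded away from $0$, which requires exactly the quantitative input that the half-width of the window is $O(\sqrt h)$ so that $\delta^2 h$ stays $O(1)$. Small $h$ (where the window may contain only $O(1)$ integers, or be empty) is handled separately by choosing the constants $c,C$ to accommodate finitely many cases. I would organize the argument so that the ratio identity from (iii) also does double duty: it shows $\pi(i,j)$ is unimodal in $j$ along the antidiagonal $i+j = \text{const}$, which gives the upper bound in (i) immediately from (ii) and reduces the lower bound in (i) to checking it at the two endpoints of the window.
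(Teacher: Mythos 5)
Your proposal is correct and follows essentially the same route as the paper: part (iii) is proved by the identical ratio computation $\pi(i+1,j)/\pi(i,j)=(i+j)/(2i)$, and parts (i)--(ii) are proved by Stirling's formula applied to the explicit kernel \eqref{pi}, which is exactly what the paper invokes (albeit tersely). The extra details you supply — the central-binomial bound for (ii) and the Gaussian-factor control for (i) showing $\delta^2 h = O(1)$ over the window — are the natural fleshing-out of the paper's one-line "straightforward computation using Stirling's formula."
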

\begin{proof}
Properties (i) and (ii) follow from straightforward computation using Stirling's formula and \eqref{pi}. For Property
(iii), we see that
$
\frac{\pi(i+1,j)}{\pi(i,j)} = \frac{i+j}{2i}<1
$ for $j<i$, and (iii)  follows from induction.
\end{proof}

\begin{lemma}\label{Etau}
We have that $\E \tau_h  =\E Z_{\tau_h}-Z_0 $. In particular, we have that $\E\left[\tau_h | Z_0=k \right] \geq h-k$.
\end{lemma}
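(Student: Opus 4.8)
The plan is to observe that $Z_t - t$ is a martingale and read the claimed identity off optional stopping. Indeed, from \eqref{eq-Z} and $\E X_{t,i}=1$ we have $\E[Z_{t+1}\mid\mathcal F_t]=Z_t+1$ for the natural filtration $(\mathcal F_t)$, so $M_t\triangleq Z_t-t$ is an $(\mathcal F_t)$-martingale. The target identity $\E\tau_h=\E Z_{\tau_h}-Z_0$ is then precisely $\E M_{\tau_h}=\E M_0=Z_0$, and the ``in particular'' clause is immediate: since $\tau_h=\inf\{t\ge0:Z_t\ge h\}$ forces $Z_{\tau_h}\ge h$, we get $\E[\tau_h\mid Z_0=k]=\E[Z_{\tau_h}\mid Z_0=k]-k\ge h-k$.

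The real content is justifying the optional stopping step, since $\tau_h$ is unbounded and $Z$ has unbounded increments. First I would record that $\tau_h$ has an exponential tail: by \eqref{eq-Z}, $Z_{t+1}\ge X_{t,1}$, so $\P(Z_{t+1}\ge h\mid\mathcal F_t)\ge\P(X_{t,1}\ge h)=2^{-h}$, and iterating gives $\P(\tau_h>n)\le(1-2^{-h})^n$; in particular $\tau_h<\infty$ a.s.\ and $\E\tau_h<\infty$. Next I would bound $\E Z_{\tau_h}$ using the overshoot estimate of Lemma~\ref{overshooting}: $\P(Z_{\tau_h}\ge u\mid Z_0=k)\le\P(Z_1\ge u\mid Z_0=h,\ Z_1\ge h)$, and since $Z_1$ given $Z_0=h$ is a sum of $h+1$ i.i.d.\ mean-$1$ geometric variables (a negative binomial, hence with finite mean $h+1$ and an exponential tail), while $\P(Z_1\ge h\mid Z_0=h)$ is bounded below by an absolute constant, integrating the tail bound gives $\E Z_{\tau_h}=O(h)<\infty$.

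With these integrability facts in hand, apply optional stopping to the bounded stopping time $t\wedge\tau_h$ to get $\E Z_{t\wedge\tau_h}-\E[t\wedge\tau_h]=\E M_{t\wedge\tau_h}=Z_0$ for every finite $t$, then let $t\to\infty$. On $\{t<\tau_h\}$ one has $Z_{t\wedge\tau_h}=Z_t\le h-1\le Z_{\tau_h}$, and on $\{t\ge\tau_h\}$ one has $Z_{t\wedge\tau_h}=Z_{\tau_h}$, so $0\le Z_{t\wedge\tau_h}\le Z_{\tau_h}$ pointwise with $Z_{\tau_h}$ integrable; dominated convergence gives $\E Z_{t\wedge\tau_h}\to\E Z_{\tau_h}$, and monotone convergence gives $\E[t\wedge\tau_h]\uparrow\E\tau_h<\infty$. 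Passing to the limit yields $\E Z_{\tau_h}-\E\tau_h=Z_0$, which is the claim. The only genuinely delicate point is the a priori finiteness of $\E Z_{\tau_h}$ (needed to license the dominated convergence), and Lemma~\ref{overshooting} is exactly the tool for that; the rest is routine.
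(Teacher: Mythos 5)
Your proof is correct and takes essentially the same route as the paper, which simply writes ``Applying the Optional Stopping Theorem to the martingale $Z_t-t$ at time $\tau_h$'' and records the conclusion. You go further by supplying the integrability checks (exponential tail of $\tau_h$, finiteness of $\E Z_{\tau_h}$ via Lemma~\ref{overshooting}, and the $t\wedge\tau_h$ truncation/limit argument) that the paper leaves implicit; these are exactly the right points to verify, so your write-up is a more careful version of the same argument.
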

\begin{proof}
Applying the Optional Stopping Theorem to the martingale $Z_t-t$ at time $\tau_h$, we get
$
\E\tau_h =\E Z_{\tau_h} -Z_0 \geq h-k
$, as desired.
\end{proof}

\section{Proof of Theorem \ref{mainthm}} \label{sec:main}

The current section contains three parts: in Subsection~\ref{sec:firstmoment} we adapt the arguments in \cite{toth2001no} and provide a lower bound on the first moment for the number of instances for the consecutive three favorite sites; in Subsection~\ref{sec:secondmoment} (which contains the main novelty of the present paper), we show that the second moment is of the same order as the square of the first moment, thereby proving that three favorite sites occurs with non-vanishing probability; in Subsection~\ref{sec:01law} we prove a 0-1 law for three favorite sites and thus complete the proof of Theorem~\ref{mainthm}.

\subsection{Lower bound on the first moment}\label{sec:firstmoment}

For $x>0$ and $h\in \mathbb N$, in order to bound the probability for three consecutive favorite sites with local time $h$  at vertices $x$, $x+1$ and $x+2$, the main part is to control the probability for the local times below $h$ everywhere except at $x$, $x+1$ and $x+2$. To this end, it suffices to consider the edge local times (i.e., number of downcrossings) in the Ray-Knight representation with appropriate conditioning in the region of $(x, x+2)$. Then in the region outside of $(0, x+2)$, these edge local times evolve as martingales (when looking forward spatially in $(x+2, \infty)$ and backward spatially in $(-\infty, 0)$) and it is fairly standard to control the probability of staying below the level $h$; in the region $(0, x)$, the edge local times are not exactly a martingale (when looking backward spatially; see \eqref{eq-Z}) and the analysis is slightly more complicated. In the next lemma,
we prove a lower bound on the first moment of $\sum_{t=1}^{\tau_h}\tfrac{h-Z_t}{h}$. Combined with standard martingale analysis in the region outside of $(0, x+2)$ and a change of summation when summing over $x$ (see \eqref{prop32eq3}), this will then give a lower bound on the first moment of $N_H$ (see Proposition~\ref{ENH}).

\begin{lemma}\label{Efrac}
Suppose that $Z_0=k\in [h-2\sqrt{h},h-\sqrt{h}] $. Then there exists a constant $c>0$ such that $\E(\mbox{$ \sum_{t=1}^{\tau_h} $}\tfrac{h-Z_t}{h} ) \geq c\sqrt{h}$.
\end{lemma}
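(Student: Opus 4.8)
The quantity $\sum_{t=1}^{\tau_h} \frac{h - Z_t}{h}$ is a weighted occupation time, weighted by the deficit $h - Z_t$ below the level $h$, and I want to show its expectation is at least $c\sqrt h$ when $Z_0 = k$ starts in the window $[h - 2\sqrt h, h - \sqrt h]$. The intuition is that $Z_t$ is (almost) a critical branching-type process started a distance $\Theta(\sqrt h)$ below $h$; by the diffusive nature of its fluctuations it takes time of order $(\sqrt h)^2 = h$ to reach level $h$, during which the deficit $h - Z_t$ is typically of order $\sqrt h$, giving a total of order $h \cdot \sqrt h / h = \sqrt h$. To make this rigorous I would avoid estimating the full path and instead isolate a controlled chunk of the trajectory.

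\textbf{Key steps.} First I would introduce a slightly lower barrier, say $a = h - \tfrac12\sqrt h$ (or some convenient constant multiple of $\sqrt h$ below $h$), and restrict attention to the time window before $Z_t$ first exceeds $a$: on that window every summand $\frac{h - Z_t}{h} \ge \frac{h - a}{h} = \tfrac{1}{2}\sqrt h / h = \tfrac{1}{2} h^{-1/2}$, so it suffices to show the process spends an expected time $\Omega(h)$ below level $a$ before hitting $[h,\infty)$, i.e. $\E[\tau_a] = \Omega(h)$ where $\tau_a$ is the hitting time of $[a,\infty)$ for $Z_t$. Second, I would get a matching lower bound on $\E[\tau_a]$ using the martingale structure: $Z_t - t$ is a martingale (this is exactly what underlies Lemma~\ref{Etau} for the $R$/$Z$ processes — indeed Lemma~\ref{Etau} gives $\E[\tau_h \mid Z_0 = k] = \E Z_{\tau_h} - Z_0 \ge h - k$), so $\E[\tau_a \mid Z_0 = k] \ge a - k \ge (h - \tfrac12\sqrt h) - (h - \sqrt h) = \tfrac12 \sqrt h$. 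That alone only gives $\Omega(1)$ from this crude bound, so I need to be more careful: rather than using the hitting time of a single slightly-lower level, I would iterate, or equivalently use the overshoot control in Lemma~\ref{overshooting} together with Lemma~\ref{Etau} applied at level $h$ itself — $\E[\tau_h \mid Z_0 = k] = \E[Z_{\tau_h}] - k$, and the overshoot $\E[Z_{\tau_h}] - h$ is only $O(\sqrt h)$ by Lemma~\ref{overshooting} and Lemma~\ref{tranprob}(ii), while $h - k \ge \sqrt h$; hence $\E[\tau_h \mid Z_0 = k] = \Theta(\sqrt h)$, which is still not $\Theta(h)$. The correct route is to lower-bound the weighted sum directly: write $\sum_{t=1}^{\tau_h}\frac{h - Z_t}{h} = \frac{1}{h}\big(h\,\tau_h - \sum_{t=1}^{\tau_h} Z_t\big)$ and use a second martingale identity. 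Summing the martingale relation $\E[Z_{t+1} - Z_t \mid \mathcal F_t] = 1$ telescoped, or better, applying optional stopping to $Z_t^2 - (\text{compensator})$, lets me compute $\E\sum_{t=1}^{\tau_h} Z_t$ in terms of $\E[Z_{\tau_h}^2]$; combined with $\E\tau_h = \E Z_{\tau_h} - k$ this yields a clean expression whose dominant term is of order $\sqrt h$. Concretely I expect $\E\sum_{t=1}^{\tau_h}(h - Z_t) = \tfrac12\big(\E[(Z_{\tau_h} - k)^2] - \E[Z_{\tau_h} - k]\big) + (\text{lower order})$, and since $Z_{\tau_h} - k \ge h - k \ge \sqrt h$ the leading term is $\Omega(h)$, giving the claimed $\Omega(\sqrt h)$ after dividing by $h$.

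\textbf{Main obstacle.} The delicate point is that $Z_t$ is \emph{not} exactly a martingale in the sense needed for the second-moment (variance) computation: from \eqref{eq-Z}, $Z_{t+1} = \sum_{i=1}^{Z_t+1} X_{t,i}$, so $\E[Z_{t+1}\mid Z_t] = Z_t + 1$ (good, $Z_t - t$ is a martingale) but the conditional variance is $\mathrm{Var}(Z_{t+1}\mid Z_t) = 2(Z_t + 1)$ since the $X_{t,i}$ are geometric with mean $1$ and variance $2$. So the right compensated process is $Z_t^2 - Z_t^2\big|_{\text{mart part}}$; I would set $M_t = Z_t^2 - \sum_{s<t}(2 Z_s + 3)$ or the analogous exact expression and check it is a martingale, then apply optional stopping at $\tau_h$, controlling the stopped-sum and the overshoot term $\E[Z_{\tau_h}^2] = (h + O(\sqrt h))^2 + O(h)$ via Lemmas~\ref{overshooting} and~\ref{tranprob}. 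The bookkeeping of these lower-order terms — ensuring the $\Omega(h)$ main term genuinely dominates the overshoot corrections, which are also of order $h$ in $\E[Z_{\tau_h}^2 - k^2]$ but come with the right sign after subtracting $\E\tau_h$ — is where I expect to spend the real effort; an alternative, perhaps cleaner, packaging is to run the whole argument with the barrier at $h$ replaced by a random collection of intermediate barriers and sum the contributions, but the martingale identity above should be the most economical.
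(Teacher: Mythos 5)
Your overall strategy --- apply optional stopping to a martingale built from the quadratic-variation compensator of $Z_t$ (together with $\E\tau_h = \E Z_{\tau_h}-Z_0$ and the overshoot bound) --- is the same one the paper uses, just packaged with a single quadratic martingale instead of the paper's two martingales $M_t=\sum_{s\le t}(Z_s-s)-t(Z_t-t)$ and $M'_t=-\tfrac14 Z_t^2+tZ_t-\tfrac12 t^2+\tfrac14 t$. The ideas you appeal to are the right ones and the plan would close.

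However, the concrete formula and the order-of-magnitude arithmetic at the end are wrong and need to be fixed before the plan actually proves the lemma. First, the compensator is $\sum_{s<t}(4Z_s+3)$, not $\sum_{s<t}(2Z_s+3)$, since $\E[Z_{t+1}^2\mid\mathcal F_t]=2(Z_t+1)+(Z_t+1)^2=Z_t^2+4Z_t+3$. Second, and more seriously, the asserted identity $\E\sum_{t=1}^{\tau_h}(h-Z_t)\approx\tfrac12\bigl(\E[(Z_{\tau_h}-k)^2]-\E[Z_{\tau_h}-k]\bigr)$ cannot be right: $(Z_{\tau_h}-k)^2$ is only $\Theta(h)$, and $\Theta(h)/h=\Theta(1)$, not the claimed $\Theta(\sqrt h)$. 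The correct computation gives
\begin{equation*}
\E\sum_{t=1}^{\tau_h}(h-Z_t)=\bigl(h-\tfrac14\bigr)\E(Z_{\tau_h}-Z_0)-\tfrac14\,\E\bigl[(Z_{\tau_h}-Z_0)(Z_{\tau_h}+Z_0)\bigr]
=\tfrac14\,\E\bigl[(Z_{\tau_h}-Z_0)\bigl(4h-1-(Z_{\tau_h}+Z_0)\bigr)\bigr],
\end{equation*}
and the $\Theta(h^{3/2})$ scaling comes from the \emph{linear} term $h\,\E(Z_{\tau_h}-Z_0)\gtrsim h\sqrt h$, with $4h-1-(Z_{\tau_h}+Z_0)\approx 2h$ supplying the extra factor of $h$; the overshoot bound only needs to kill the correction $\E[(Z_{\tau_h}-Z_0)(Z_{\tau_h}+Z_0-2h)]=O(h)$, which after dividing by $4h$ is an $O(1)$ error against a $\tfrac12\sqrt h$ main term. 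Once you replace your heuristic identity with this one, the argument goes through exactly as in the paper.
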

\begin{proof}
Let $M_t = \sum_{s=1}^t (Z_s-s)-t(Z_t-t)$, and let $\mathcal F_t = \sigma(Z_0, Z_1, \ldots, Z_t)$. We see that
$$\mbox{$\E(M_{t+1} \mid \mathcal F_t) = \left[ \sum_{s=1}^t (Z_s-s)+ (Z_t-t) \right]-(t+1)(Z_t-t) = M_t$}\,.$$
Thus $(M_t)$ is a martingale. By the Optional Stopping Theorem, we see that $\E \left(\sum_{t=1}^{\tau_h}(Z_t-t)\right) = \E \tau_h(Z_{\tau_h}-\tau_h)$
and hence
\begin{align}\label{Efraceq1}
\E\mbox{$\left( \sum_{t=1}^{\tau_h}\tfrac{h-Z_t}{h} \right) = (1+\frac{1}{2h})\E \tau_h-\frac{1}{h}\E[ \tau_h Z_{\tau_h}-\frac{1}{2} \tau_h^2 ] $}.
\end{align}
Now consider the process $M'_t=-\frac{1}{4}Z_t^2 +tZ_t-\frac{1}{2}t^2+\frac{1}{4} t$. By \eqref{eq-Z}, we see that
$$\E(M'_{t+1}\mid \mathcal F_t) = -\tfrac{1}{4}(Z_t^2+4Z_t+3)+(tZ_t+Z_t+t+1)-\tfrac{1}{2}(t^2+2t+1)+\tfrac{1}{4}(t+1)\,,$$
where equal to $M'_t$.
So $(M'_t)$ is a martingale.
Using the Optional Stopping Theorem to $(M'_t)$ at $\tau_h$, we have
\begin{align}\label{Efraceq2}
\mbox{$\E \left[ \tau_h Z_{\tau_h} -\frac{1}{2}\tau_h^2   \right] =\E\left[   \frac{1}{4}Z_{\tau_h}^2-\frac{1}{4} \tau_h   \right]
-\frac{1}{4} Z_0^2 = \frac{1}{4} \E (Z_{\tau_h}^2-Z_0^2) -\frac{1}{4} \E \tau_h$}\,.
\end{align}
Combining \eqref{Efraceq1}, \eqref{Efraceq2} and Lemma \ref{Etau}, we get
\begin{align*}
\mbox{$\E\left[ \sum_{t=1}^{\tau_h} \frac{h-Z_t}{h} \right]$}  =&(1+\frac{1}{4h}) \E \tau_h  -\frac{1}{4h} \E\left[ Z_{\tau_h}^2 -Z_0^2 \right] \\
= &(1+\frac{1}{4h}) \E( Z_{\tau_h}-Z_0 )-\frac{1}{4h}\E[ (Z_{\tau_h}-Z_0)(Z_{\tau_h}+Z_0) ]  \\
\geq & \frac{1}{4h}\E[ (Z_{\tau_h}-Z_0)(4h-(Z_{\tau_h}+Z_0)) ]\,.
\end{align*}
Obviously $Z_{\tau_h}-Z_0\geq h-k \geq \sqrt{h}$ and by Lemma \ref{overshooting} we have that $\E(Z_{\tau_h}- Z_0)(Z_{\tau_h}+Z_0 - 2h) = O(h)$. Therefore there is a constant $c$ such that $\E\left[ \sum_{t=1}^{\tau_h} \frac{h-Z_t}{h} \right]\geq c\sqrt{h}$ for sufficiently large $h$.
\end{proof}

\begin{proposition}\label{ENH}
For a constant $c>0$ we have $\E N_H\geq c \log H$.
\end{proposition}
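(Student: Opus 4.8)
The plan is to combine the Ray--Knight factorization of $\P(A_{x,h}^{(k)})$ with Lemma~\ref{Efrac} applied at the \emph{halved} scale $h/2$. First I would fix $h\ge1$ and $k\in I_h$ and unwind the Ray--Knight description of $\P(A_{x,h}^{(k)})$ recalled above. Since the chain $Y^{(k)}$ is independent of the pair $(Z^{(k)},Y^{\prime(k)})$, and $Y^{\prime(k)}$ depends on $Z^{(k)}$ only through its terminal value $Z_{x-1}^{(k)}=Y_0^{\prime(k)}$, the event splits and
\[
\P(A_{x,h}^{(k)})=q_h^{(k)}\,p^{(k)}_{\mathrm{fw}}\,\E\big[\1_{\{\tZ_t<h,\ 1\le t\le x-1\}}\,g(Z_{x-1})\,\big|\,Z_0=k\big],
\]
where $q_h^{(k)}:=\pi(k,h-k-1)\,\pi(h-k-1,k+1)\,\pi(k+1,h-k-1)$ is the probability of the three prescribed steps of $Y^{(k)}$, $p^{(k)}_{\mathrm{fw}}:=\P(\tY_t<h\text{ for all }t\ge3\mid Y_2=h-k-1)$, and $g(m):=\P(\tY^{\pk}_t<h\text{ for all }t\ge1\mid Y^{\prime}_0=m)$. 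Summing over $x\ge1$, using $\{\tZ_t<h,\ 1\le t\le x-1\}=\{\ttau_h\ge x\}$ and the change of summation $\sum_{x\ge1}\1_{\{\ttau_h\ge x\}}g(Z_{x-1})=\sum_{j=0}^{\ttau_h-1}g(Z_j)$, I obtain
\[
\E N_H=\sum_{h=1}^{H}\sum_{k\in I_h}q_h^{(k)}\,p^{(k)}_{\mathrm{fw}}\,\E\Big[\mbox{$\sum_{j=0}^{\ttau_h-1}$}g(Z_j)\,\Big|\,Z_0=k\Big],
\]
so it suffices to bound the three factors from below for all $k$ in a subinterval $I_h'\subseteq I_h$ with $|I_h'|\asymp\sqrt h$.

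For $k\in I_h$ the integers $k$, $k+1$ and $h-k-1$ all lie within $O(\sqrt h)$ of $h/2$, hence inside $\big(\tfrac12(h-10\sqrt h),\tfrac12(h+10\sqrt h)\big)$, so Lemma~\ref{tranprob}(i) gives $q_h^{(k)}\ge c\,h^{-3/2}$. For $p^{(k)}_{\mathrm{fw}}$ I would use the ``standard martingale analysis outside $(0,x+2)$'': if the forward chain stays $\le\tfrac h2-C\sqrt h$ for all $t\ge3$ then $\tY_t=Y_t+Y_{t-1}<h$ for $t\ge4$ and $\tY_3=Y_3+Y_2\le(\tfrac h2-C\sqrt h)+(h-k-1)<h$. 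Requiring first $Y_3\le\tfrac h2-2C\sqrt h$ (probability bounded below, since $\E[Y_3\mid Y_2=h-k-1]=h-k-1$ differs from $\tfrac h2$ by $\Theta(\sqrt h)$ while the fluctuations of $Y_3$ are also of order $\sqrt h$), and then, by optional stopping applied to the non-negative martingale $Y_t$ together with Fatou (using $Y_t\to0$), bounding the chance of never afterwards reaching $\tfrac h2-C\sqrt h$ from below by $1-\tfrac{Y_3}{h/2-C\sqrt h}\ge\tfrac{c}{\sqrt h}$, I get $p^{(k)}_{\mathrm{fw}}\ge c\,h^{-1/2}$. Only one-sided bounds are needed here, which makes this routine.

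The crux is the $Z$-factor, where the power $\sqrt h$ must come from Lemma~\ref{Efrac}. Write $h'=h/2$. The same martingale reasoning applied to the $Y'$-chain (threshold $\sim h'$, using $\tY^{\pk}_t\le 2\sup_s Y'_s$) gives $g(m)\ge c\,(h'-m)_+/h'$. To extract $\sqrt h$ I would restart the $Z$-chain at time $1$ and restrict $k$ to a sub-window $I_h'\subset I_h$ of size $\asymp\sqrt h$: conditionally on $\ttau_h\ge2$, the shifted time $\ttau_h-1$ has the law of $\ttau_h$ for a fresh $Z$-chain started at $Z_1$, and a one-step local-CLT estimate shows that with probability $\ge c$ the value $Z_1$ lands in a window $W$ of width $\asymp\sqrt h$ which sits $\Theta(1)$ standard deviations below $\E[Z_1\mid Z_0=k]=k+1$ and has $\max W<h'-C\sqrt h$. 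For $z\in W$ I pick a level $\tilde h\asymp h$ with $z\in[\tilde h-2\sqrt{\tilde h},\tilde h-\sqrt{\tilde h}]$ and $2\tilde h+1<h$; then $\ttau_h\ge\tau_{\tilde h}$ (because $Z_t<\tilde h$ for $t\le s$ forces $\tZ_t=Z_{t-1}+Z_t+1<2\tilde h+1<h$), and $g(Z_j)\ge c\,(\tilde h-Z_j)/\tilde h$ for $j<\tau_{\tilde h}$, so Lemma~\ref{Efrac} at level $\tilde h$ gives
\[
\E\Big[\mbox{$\sum_{j=0}^{\ttau_h-1}$}g(Z_j)\,\Big|\,Z_0=z\Big]\ \ge\ c\,\E\Big[\mbox{$\sum_{j=0}^{\tau_{\tilde h}-1}$}\tfrac{\tilde h-Z_j}{\tilde h}\,\Big|\,Z_0=z\Big]\ \ge\ c'\sqrt h .
\]
Restarting at time $1$ then yields $\E[\sum_{j=0}^{\ttau_h-1}g(Z_j)\mid Z_0=k]\ge c''\sqrt h$ for every $k\in I_h'$.

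Plugging the three bounds into the identity for $\E N_H$ and keeping only large $h$ and $k\in I_h'$,
\[
\E N_H\ \ge\ \sum_{h\le H}|I_h'|\cdot c\,h^{-3/2}\cdot c\,h^{-1/2}\cdot c''\sqrt h\ \ge\ \sum_{h\le H}c_0\,h^{-1}\ \ge\ c\log H .
\]
The genuinely delicate points, as opposed to the routine martingale/overshoot bounds, are the matching of the stopping time $\ttau_h$ of the ``doubled'' process $\tZ$ with the hypotheses of Lemma~\ref{Efrac} (the passage to the scale $h/2$ and the selection of the windows $I_h'$ and $W$), and the quantitative one-step statement that $Z_1$, a sum of about $h/2$ i.i.d.\ geometrics, lands with probability bounded away from $0$ in a $\Theta(\sqrt h)$-window a bounded number of standard deviations below its mean.
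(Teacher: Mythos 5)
Your proof is correct and follows essentially the same route as the paper: the Ray--Knight factorization of $\P(A_{x,h}^{(k)})$, the $h^{-3/2}$ bound on the three transition kernels from Lemma~\ref{tranprob}(i), the $h^{-1/2}$ martingale bound on the forward $Y$-chain, the change of summation turning $\sum_x$ into $\sum_{j=0}^{\ttau_h-1}g(Z_j)$, and an application of Lemma~\ref{Efrac} at scale $\approx h/2$ after restarting the $Z$-chain at time $1$. The only cosmetic difference is in how the hypotheses of Lemma~\ref{Efrac} are arranged after the restart: the paper conditions on $\{Z_1\le k_1\}$ with $k_1=\tfrac12(h-2\sqrt h)$ and uses stochastic monotonicity to replace $Z_1$ by the fixed state $k_1$, whereas you invoke a one-step local-CLT estimate to put $Z_1$ in a $\Theta(\sqrt h)$-window $W$ and then shift the threshold to a level $\tilde h\asymp h/2$ matched to each $z\in W$ --- both are standard and yield the same $c\sqrt h$ lower bound.
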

\begin{proof}
In what follows, $c_i$ for $i\geq 1$ and $c$ are all constants. By the Ray-Knight representation, $\E N_H$ is equal to the following product:
\begin{align*}
  &\sum_{h=1}^H \sum_{k\in I_h} \P\Big( Y_0^{(k)}=h-k-1, ~Y_1^{(k)}= k+1,~ Y_2^{(k)}=h-k-1,\\
  &\qquad \qquad \qquad \{\tY_t^{(k)}<h, ~\mbox{ for }t\geq 3\} \Big) \\
&\quad \times \sum_{x=1}^\infty  \P\big( \{\tZ_t^{(k)}<h,~1\leq t\leq x-1\},~\{\tY_t^{\prime(k)}<h,~\mbox{ for }t\geq 1\} \big) \,.
\end{align*}
Thus, we get that
\begin{align*}
\E N_H \geq &\sum_{h=1}^H \sum_{k\in I_h} \pi(l,h-k-1)\pi(h-k-1,k+1) \pi(k+1,h-k-1)~ \\
&  \cdot \P(Y_t^{(h-k-1)} < \tfrac{1}{2} h \text{ for $t\geq 0$} )
\cdot \sum_{x=1}^\infty  \P(  \ttau_h \geq x, \{\tY_t^{\prime(k)}<h, \mbox{ for }t\geq 1\} )\,.
\end{align*}
By Lemma \ref{tranprob} (i), all $\pi(\cdot,\cdot)$ in the above equation are at the scale $h^{-\frac{1}{2}}$. Since $Y_t$ is a martingale, by using the Optional Stopping Theorem at $\sigma_{\frac{h}{2}}\wedge \omega$ where $\sigma_{\frac{h}{2}}$ and $\omega$ are defined in \eqref{stoppingtime}, we have
\begin{align*}
\P(Y_t^{(h-k-1)} < \tfrac{h}{2} \text{ for $t\geq 0$} )  &= \P( Y_t^{(h-k-1)} \text{ hits 0 before } \tfrac{h}{2} ) \\
&\geq \tfrac{h/2-(h-k-1)}{h/2}\geq c_1 h^{-\frac{1}{2}} \,.
\end{align*}
So we get
\begin{align}\label{prop32eq1}
\E N_H \geq  c_2 \sum_{h=1}^H \sum_{k\in I_h} \sum_{x=1}^\infty h^{-2} \P\left(  \ttau_h \geq x, ~\{\tY_t^{\prime(k)}<h,~t\geq 1\} \right) \,.
\end{align}
Let $k_1=\frac{1}{2} (h-2\sqrt{h})$. By independence in the Ray-Knight representation,
\begin{align*}
& \sum_{x=1}^\infty \P(  \ttau_h \geq x, ~\{\tY_t^{\prime(k)}<h,~\mbox{ for }t\geq 1\} ) \\
\geq & \sum_{x=1}^\infty \P( Z_1^{(k)}\leq k_1 , ~Z_t^{(k)}<\frac{h}{2} \text{ for } 2\leq t \leq x-1 ,~
 ~\{Y_t^{\prime(k)}<\tfrac{h}{2},~ \mbox{ for }t\geq 1\} ) \\
 \geq & \sum_{x=2}^\infty \sum_{l=0}^{[\frac{h}{2}-1]}  \Big(\P(Z_1^{(k)} \leq k_1 )\cdot \P(  ~Z_t^{(k_1)}<\tfrac{h}{2} \text{ for } 1\leq t \leq x-2 , ~ Z_{x-2}^{(k_1)}=l) \\
  & \qquad \qquad \times \P( Y_t^{ (l)} \text{ hits 0 before } \tfrac{h}{2})\Big) \,.
\end{align*}
By Lemma \ref{tranprob} (i), $ \P(Z_1^{(k)} \leq k_1 )\geq c_3$. Using the Optional Stopping Theorem again, we have $\P\left( Y_t^{(l)} \text{ hits 0 before } \frac{h}{2} \right)\geq \frac{h/2-l}{h/2}$. So
\begin{align}\label{prop32eq2}
& \sum_{x=1}^\infty \P\left(  \ttau_h \geq x, ~\{\tY_t^{\prime(k)}<h,~t\geq 1\} \right) \nonumber \\
\geq &  c_3 \cdot  \sum_{x=1}^\infty \sum_{l=0}^{[\frac{h}{2}-1]}\P\left( \tau_{h/2}^{(k_1)}\geq x , ~ Z_{x-1}^{(k_1)} =l\right)\cdot  \frac{h/2-l}{h/2}  \,.
\end{align}
By interchange of the summation and the expectation (which is valid by the Monotone Convergence Theorem) and Lemma \ref{Efrac}, we have that the right hand side of \eqref{prop32eq2} is equal to
\begin{align}\label{prop32eq3}
& c_3\cdot \E\Big[ \sum_{l=0}^{[\frac{h}{2}-1]} \sum_{x=1}^{\tau_{h/2}^{(k_1)}} \frac{h/2-l}{h/2} \cdot  \1_{\big\{  Z_{x-1}^{(k_1)} =l \big\}} \Big]
= c_3 \E\Big( \sum_{t=0}^{\tau_{h/2}^{(k_1)}-1} \frac{h/2- Z_{t}^{(k_1)}}{h/2} \Big) \geq c_4 \sqrt{h}   \,,
\end{align}
where in the second inequality we did change of variable $t=x-1$. Thus by \eqref{prop32eq1} and \eqref{prop32eq3},
\begin{align*}
\E N_H\geq  \sum_{h=1}^H \sum_{k\in I_h} & c_5~ h^{-\frac{3}{2}} \geq c_6 \cdot \sum_{h=1}^H \frac{1}{h} \geq c \log H   \,,
\end{align*}
completing the proof of the proposition.
\end{proof}

\subsection{Upper bound on the second moment} \label{sec:secondmoment}
The calculation of second moment involves the two three favorite sites that happen in chronological order. The key insight is that two instances of three favorite sites with no spatial overlap are almost independent. Before giving the bound for the second moment, we discuss some useful concepts and tools that characterize the independence of different three favorite sites.

Let $D(t)=(D(t,x),x\in \Z) \in \N^{\Z}$ be the random vector that records the number of downcrossings of each site by the time $t$.
For $\ell\in \N^\Z$, we use $\ell(i)$, $i\in \Z$ to denote the $i$-th component of $\ell$.
For $\ell\in \N^\Z$, define $B_x(\ell)=\{\exists t<\infty:~ D(t)=\ell,~ S(t-1)=x-1,~S(t)=x \}$. Note that if $B_x(\ell)$ happens, there exists a unique $t \in \N$ such that $D(t)=\ell,~ S(t-1)=x-1$ and $S(t)=x$. Sometimes we abuse the terminology ``after $B_x(\ell)$ happens'' by meaning ``after the unique $t$ with $D(t)=\ell,~ S(t-1)=x-1,~S(t)=x$''. We also say ``$B_x(\ell)$ happens before $B_{x'}(\ell')$'' by meaning the unique $t$ (corresponding to $B_x(\ell)$) is less than the unique $t'$ (corresponding to $B_{x'}(\ell')$).

 Let $\mathcal P =\{ \ell: \P(B_x(\ell))>0 \text{ for some $x$} \}$. Clearly for any $\ell \in \mathcal P$, $\ell$ has compact support.
For $\mathcal Q\subset \mathcal P$, denote $B_x(\mathcal Q)=\bigcup_{\ell\in \mathcal Q}B_x(\ell)$. Then we have
$A_{x,h}^{(k)}=B_x (\mathcal P_{x,h}^{(k)})$ where $\mathcal P_{x,h}^{(k)}$ is the collection of $\ell\in \mathcal P$ such that
\begin{align*}
 &\ell(x-1)=k, ~\ell(x)=h-k-1,~ \ell(x+1)=k+1,~\ell(x+2)=h-k-1\,; \\
 \qquad&\ell(i-1)+\ell(i)<h \mbox{ for all }i\neq x,x+1,x+2 \,.
 \end{align*}
Our main intuition on bounding the correlation between two instances of three favorite sites is the following: Suppose at some  time (say $T_1$) we have an instance of three favorite points at $x, x+1, x+2$ with edge local time (i.e., downcrossings) given by $\ell$. Our crucial observation is that conditioning on $B_x(\ell)$ does not increase much of the probability for producing an instance of three favorite sites in a future time (say $T_2$) which are spatially different from those of $\ell$. To this end, we let $\ell'$ be one of \emph{many} local perturbations of $\ell$ (which are obtained from $\ell$ by decreasing the values at $x+1$ and $x+2$). We note that (see Figure \ref{perturb} for an illustration)
\begin{itemize}
\item The event $B_x(\ell)$ (respectively, $B_x(\ell')$) corresponds to that the edge local time is $\ell$ (respectively, $\ell'$) when the random walk cross the directed edge $(x-1, x)$ for the $(\ell(x-1)+1)$'th time (note that $\ell(x-1)= \ell'(x-1)$; and note that this corresponds to time $T_1$ in Figure \ref{perturb}). Conditioned on $B_x(\ell)$ (respectively, $B_x(\ell')$), the edge local time at a later time (which corresponds to $T_2$ in Figure \ref{perturb}) is $\ell$ (respectively, $\ell'$) superposed with an independent edge local time field which we denote by $\tilde \ell$. By the strong Markov property for random walks, the law of $\tilde \ell$ is the same regardless of conditioning on $B_x(\ell)$ or $B_x(\ell')$.
\item If the field $(\ell + \tilde \ell)$ produces three favorite sites which are spatially different from those of $\ell$, then the field $(\ell' + \tilde \ell)$ also produces three favorite sites.
\end{itemize}

\begin{figure}[H]
  \centering
  \includegraphics[width=12cm]{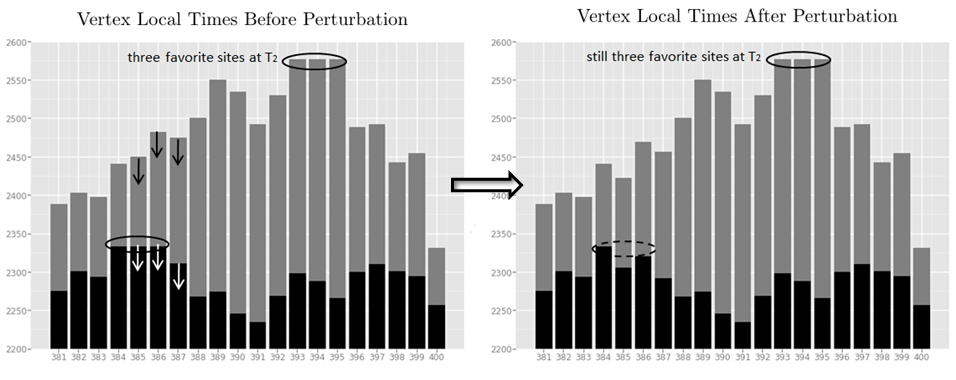}\\
  \caption{The black bars represent vertex local times at $T_1$ and the grey bars represent ones at $T_2$. When we decrease the edge local times at $x+1$ and $x+2$, descent of vertex local times happens at $x+1,x+2$ and $x+3$. After the local time perturbation at time $T_1$, we will still get ``three favorite sites'' at $T_2$. }\label{perturb}
\end{figure}
In summary, we see that the conditional probability of producing an instance of three favorite sites which are spatially different from those of $\ell$ given $B_x(\ell)$ is the same as the conditional probability given $B_x(\ell')$. But the probability for the union of $B_x(\ell')$'s when $\ell'$ ranging over all legitimate perturbations is much larger than that of $B_x(\ell)$ --- in fact larger by a factor of order $h = \ell(x-1) + \ell(x) + 1$ (see Lemma~\ref{fluc} below). This is a (quantitative) manifestation that the event $B_x(\ell)$ is uncorrelated with a spatially different instance of  three favorite sites in the future.

Our formal proof does not exactly follow the discussion above on controlling the conditional probability, as it turns out slightly simpler to directly compute the joint probability for two instances of three favorite sites (but the intuition is the same).  For the precise implementation, we let $\A$ be the set of all subsets of $\mathcal P$ and define a map $\varphi_x: \mathcal P \mapsto \A$ mapping an $\ell\in \mathcal P$ to a collection of vectors where we locally push down the values at locations $x+1$ and $x+2$. More precisely, we define $\varphi_x(\ell) $ to be
\begin{align*}
 \{\ell^* \in \mathcal P:~ \ell^*(i) < \ell(i) \text{ for $i=x+1,x+2$, } \ell^*(i)=\ell(i) \text{ for $i\neq x+1,x+2$} \} .
\end{align*}
\begin{lemma}\label{disjoint}
For $i=1, 2$ and $\ell^*_i \in \varphi_{x_i}(\ell_i)$ with $\ell_i \in \mathcal P_{x_i, h}^{(k_i)}$, we have that $B_{x_1}(\ell_1^*) \cap B_{x_2}(\ell_2^*)=\emptyset$ if $(x_1, \ell_1) \neq (x_2, \ell_2)$. Further, we have $B_{x_1}(\ell_1^*) \cap B_{x_2}(\ell_2^*)=\emptyset$ if $(x_1, \ell_1) = (x_2, \ell_2)$ but $\ell_1^* \neq \ell_2^*$.
\end{lemma}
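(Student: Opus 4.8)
The plan is to suppose, for contradiction, that some realization lies in $B_{x_1}(\ell_1^*)\cap B_{x_2}(\ell_2^*)$, and for $i=1,2$ let $t_i$ be the unique time with $D(t_i)=\ell_i^*$, $S(t_i-1)=x_i-1$, $S(t_i)=x_i$. I will show this forces $x_1=x_2$ and $\ell_1^*=\ell_2^*$; since the defining relations of $\mathcal P_{x_i,h}^{(k_i)}$ and $\varphi_{x_i}$ recover $\ell_i$ from the pair $(x_i,\ell_i^*)$, this then forces $\ell_1=\ell_2$ as well, contradicting either hypothesis of the lemma. Throughout I take $x_1,x_2\geq1$; the sign bookkeeping in \eqref{L} for other cases is routine.

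First I would record the structural consequences of $\ell_i^*\in\varphi_{x_i}(\ell_i)$ with $\ell_i\in\mathcal P_{x_i,h}^{(k_i)}$: since $\ell_i^*$ agrees with $\ell_i$ off $\{x_i+1,x_i+2\}$ and is strictly smaller there, the defining relations give, after a short computation, (a) $\ell_i^*(y-1)+\ell_i^*(y)\leq h-1$ for every $y\in\Z$; (b) $\ell_i^*(x_i-1)=k_i$ and $\ell_i^*(x_i)=h-k_i-1$; and (c) via \eqref{L}, the profile $L(t_i,\cdot)$ has maximum exactly $h$, attained only at $x_i$ --- indeed $L(t_i,x_i)=\ell_i^*(x_i)+\ell_i^*(x_i-1)+1=h$ by (b), $L(t_i,\cdot)\leq h$ by (a), the three level-$h$ favorites $x_i,x_i+1,x_i+2$ of $\mathcal P_{x_i,h}^{(k_i)}$ are pushed strictly below $h$ by the downward perturbation at $x_i+1,x_i+2$, and $\mathcal P_{x_i,h}^{(k_i)}$, being the set of profiles with exactly those three favorites, has no favorite to the left of $x_i$.

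Next, since $t\mapsto D(t,\cdot)$ is coordinatewise non-decreasing, by symmetry I may assume $t_1\leq t_2$, so $\ell_1^*\leq\ell_2^*$ coordinatewise. If $t_1=t_2$ then $\ell_1^*=\ell_2^*$ and $x_1=S(t_1)=x_2$, and we are done. So assume $t_1<t_2$. If $x_1>x_2$: the walk sits at $x_1$ at time $t_1$ and at $x_2-1<x_1$ at time $t_2-1$, so in between it downcrosses every site of $[x_2-1,x_1-1]$; in particular $\ell_2^*(x_1-1)\geq\ell_1^*(x_1-1)+1=k_1+1$, whence by (a) $\ell_2^*(x_1)\leq(h-1)-(k_1+1)<h-k_1-1=\ell_1^*(x_1)$, contradicting $\ell_1^*\leq\ell_2^*$. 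If $x_1\leq x_2$: then $1\leq x_1\leq x_2$ and \eqref{L} give $L(t_2,x_1)=\ell_2^*(x_1)+\ell_2^*(x_1-1)+1\geq(h-k_1-1)+k_1+1=h$, so with $L(t_2,\cdot)\leq h$ we get $L(t_2,x_1)=h$ and the equality case forces $\ell_1^*(x_1-1)=\ell_2^*(x_1-1)=k_1$; by (c) the only favorite at time $t_2$ is $x_2$, so $x_1=x_2$. Writing $C=\{z:\ell_1^*(z)\neq\ell_2^*(z)\}$, we thus have $x_1-1\notin C$. If $C=\emptyset$ the walk is monotone increasing on $[t_1,t_2]$, forcing $x_1=S(t_1)<S(t_2)=x_2$, impossible; so pick $z\in C$, for which $\ell_1^*(z)<\ell_2^*(z)$ (using $\ell_1^*\leq\ell_2^*$), i.e.\ a downcrossing at $z$ occurs in $(t_1,t_2]$. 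If $z<x_1$, the walk, starting from $x_1$ at $t_1$, must descend past every level in $[z,x_1-1]$ to reach $z$, so $x_1-1\in C$, a contradiction; if $z\geq x_1=x_2$, the walk visits $z$ at some time $s<t_2$ and is at $x_2-1<z$ at time $t_2-1$, so it descends past every level in $[x_2-1,z-1]$, forcing $x_1-1=x_2-1\in C$, again a contradiction. The case $t_1>t_2$ is symmetric.

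The steps I expect to need the most care are the structural facts (a)--(c) --- especially (c), which rests on reading $\mathcal P_{x,h}^{(k)}$ as the set of profiles producing \emph{exactly} the favorites $x,x+1,x+2$ at level $h$, so that only $x$ survives the perturbation --- and the trajectory-tracking steps above, where one must check that the relevant downcrossing is never the final step $t_2$ (otherwise $S(t_2-1)$ would have to equal both $z+1$ and $x_2-1$), and that the intermediate descents indeed produce a downcrossing at every site crossed.
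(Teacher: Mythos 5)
Your proof is correct, and it rests on the same core structural facts the paper uses: namely that when $B_x(\ell^*)$ occurs (with $\ell^* \in \varphi_x(\ell)$, $\ell \in \mathcal P_{x,h}^{(k)}$), the local-time profile $L(t,\cdot)$ has maximum exactly $h$ attained at the single site $x$, and that local/edge local times are non-decreasing in time. The execution, however, is considerably more elaborate than the paper's. The paper simply notes that $L(t,x_1)$ is non-decreasing in $t$, so if $B_{x_1}(\ell_1^*)$ occurs first then $L(t_2,x_1)\geq h$; when $x_1\neq x_2$ this contradicts your fact (c) directly, and when $x_1=x_2$ (so $\ell_1^*\neq\ell_2^*$ forces $t_1<t_2$) the walk must leave and revisit $x_1$, giving $L(t_2,x_1)\geq h+1>h$. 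Your approach instead works with the coordinatewise ordering $\ell_1^*\leq\ell_2^*$ of the downcrossing profiles and, in the case $x_1=x_2$, introduces the discrepancy set $C$ and tracks the trajectory to force $x_1-1\in C$; this is a valid alternative but replaces the one-line ``returns to $x_1$'' observation with a multi-branch path-crossing argument. Likewise, your $x_1>x_2$ branch can be collapsed to the same two-line argument (for $x_1>x_2\geq 1$ the indicator in \eqref{L} vanishes, so by your (a), $L(t_2,x_1)\leq h-1$, contradicting $L(t_2,x_1)\geq L(t_1,x_1)=h$). In short: same idea, longer route; the extra trajectory bookkeeping is sound but unnecessary.
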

\begin{proof}
Case (i): Suppose $x_1 \neq x_2$. Since clearly $B_{x_1}(\ell^*_1)$ and $B_{x_2}(\ell^*_2)$ cannot happen at the same time $t$, we can then assume without loss of generality that $B_{x_1}(\ell^*_1)$ happens first. Then when $B_{x_2}(\ell^*_2)$ happens the vertex local time at $x_1$ is at least $h$, arriving at a contradiction. \\
Case (ii): Suppose that $x_1 = x_2$ but $\ell_1 \neq \ell_2$. In this case, we have $\ell^*_1\neq \ell^*_2$. Since clearly $B_{x_1}(\ell^*_1)$ and $B_{x_2}(\ell^*_2)$ cannot happen at the same time $t$, we can then assume without loss of generality that $B_{x_1}(\ell^*_1)$ happens first. In order for $B_{x_2}(\ell^*_2)$ to happen, the random walk has to leave $x_1 (=x_2)$ and revisit $x_1$. As a result, the vertex local time at $x_1$ will be strictly larger than $h$, arriving at a contradiction.\\
Case (iii): Suppose that $x_1 = x_2, \ell_1 = \ell_2$ but $\ell_1^* \neq \ell_2^*$. This follows from the same reasoning as in Case (ii).
\end{proof}

\begin{lemma}\label{fluc}
There exist a constant $c>0$ such that for any $\ell \in P_{x,h}^{(k)}$ with $k\in I_h$,
\begin{align*}
\P\left( B_x(\varphi_x(\ell)) \right) \geq ch \P(B_x(\ell)) \,.
\end{align*}
\end{lemma}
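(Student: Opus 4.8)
The plan is to express both $\P(B_x(\ell))$ and $\P(B_x(\varphi_x(\ell)))$ in the Ray--Knight representation and then exhibit an explicit family of perturbations $\ell^*\in\varphi_x(\ell)$ of size $\asymp h$ whose total probability dominates $h\P(B_x(\ell))$. Fix $\ell\in\mathcal P_{x,h}^{(k)}$ with $k\in I_h$, so that $\ell(x-1)=k$, $\ell(x)=\ell(x+2)=h-k-1$, $\ell(x+1)=k+1$, and $\ell(i-1)+\ell(i)<h$ for all $i\notin\{x,x+1,x+2\}$. By \eqref{RKdcr} and the independence structure of the three patched processes, $\P(B_x(\ell))$ factors as a product over the three spatial regions: a ``left'' factor involving $Z^{(k)}_t$ on $0\le t\le x-1$, a ``middle'' factor $\pi(k,h-k-1)\pi(h-k-1,k+1)\pi(k+1,h-k-1)$ coming from the three prescribed transitions at $x,x+1,x+2$, and a ``right'' factor involving $Y^{\prime(k)}$ for $t\ge 1$ (with the further transition out of $x+2$ absorbed appropriately). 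The perturbations in $\varphi_x(\ell)$ only change the values at $x+1$ and $x+2$, hence they change \emph{only} the middle factor and the initial condition handed to the right factor.

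Next I would restrict to a convenient subfamily: choose $\ell^*(x+1)=k+1$ unchanged is \emph{not} allowed (we need a strict decrease at $x+1$), so instead take $\ell^*(x+1)=j$ ranging over an interval of length $\asymp\sqrt h$ around $k+1$ with $j<k+1$, and $\ell^*(x+2)=m$ ranging over an interval of length $\asymp\sqrt h$ around $h-k-1$ with $m<h-k-1$; one must also check these choices keep $\ell^*\in\mathcal P$, i.e. that the two new ``edge sums'' $\ell^*(x)+\ell^*(x+1)$ and $\ell^*(x+1)+\ell^*(x+2)$ stay below $h$ and that $\ell^*(x+2)+\ell^*(x+3)<h$ — all of which hold comfortably since $k\in I_h$ forces $k\approx h/2+O(\sqrt h)$ and the perturbed values are $h/2+O(\sqrt h)$. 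For each such $(j,m)$, by Lemma~\ref{disjoint} the events $B_x(\ell^*)$ are pairwise disjoint, so
\begin{align*}
\P\left(B_x(\varphi_x(\ell))\right)\ \ge\ \sum_{j,m}\P\left(B_x(\ell^*_{j,m})\right),
\end{align*}
and each summand has the same left factor as $\P(B_x(\ell))$, a middle factor $\pi(k,h-j)\pi(h-j,j)\pi(j,m)$, and a right factor which is $\P(\cdot\mid Y^{\prime}_0 = m)$ of the same ``stay below $h$'' type. The key comparison is then twofold: first, by Lemma~\ref{tranprob}(i) all the $\pi(\cdot,\cdot)$ appearing (in both the original and the perturbed middle factors) are $\asymp h^{-1/2}$, so the middle factors are comparable up to constants; second, the ``right factor'' for the perturbed configuration, namely the probability that a $Y$-type chain started from $m\le h-k-1$ stays below $h$ forever, is at least the corresponding probability started from $h-k-1$ (monotonicity of the chain, or a direct Optional-Stopping bound as in Proposition~\ref{ENH}). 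Hence each of the $\asymp(\sqrt h)^2 = \asymp h$ perturbed summands is $\ge c\,\P(B_x(\ell))$, giving the claim.

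The main obstacle, and the point requiring care, is the bookkeeping at the interface $x+2\,/\,x+3$: decreasing $\ell(x+2)$ changes the state that the right-hand process $\Delta_x^{(k)}(y)$ for $y\ge x+2$ is launched from (recall from the definition of $\Delta_x^{(k)}$ and $\Lambda_x^{(k)}$ that $\ell(x+2)$ both enters the constraint $\ell(x+1)+\ell(x+2)<h$... wait, $x+1,x+2$ are exempt... but $\ell(x+2)+\ell(x+3)<h$ is required), so one must verify that lowering $\ell(x+2)$ cannot violate $\ell^*(x+2)+\ell^*(x+3)<h$ — it cannot, since we only decrease — and, more substantively, that the right-region probability does not \emph{shrink} under this change. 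This is where monotonicity of the critical geometric branching process (smaller initial state stochastically dominated, hence larger probability of staying below a fixed level) is invoked; making this precise, and simultaneously keeping the middle transition probabilities $\pi(\cdot,\cdot)$ in the range $(\tfrac12(h-10\sqrt h),\tfrac12(h+10\sqrt h))$ where Lemma~\ref{tranprob}(i) applies, is the crux. Everything else is a routine product estimate over the three independent Ray--Knight blocks, with the gain of the factor $h$ coming entirely from the two-dimensional $\asymp\sqrt h\times\sqrt h$ range of admissible perturbations at $x+1$ and $x+2$.
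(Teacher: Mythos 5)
Your high-level strategy is exactly the paper's: restrict to the subfamily of $\ell^*\in\varphi_x(\ell)$ with $\ell^*(x+1)\in[k+1-\sqrt h,k+1)$ and $\ell^*(x+2)\in[h-k-1-\sqrt h,h-k-1)$, use Lemma~\ref{disjoint} to add up the $\asymp\sqrt h\times\sqrt h\asymp h$ disjoint events, and show each is $\ge c\,\P(B_x(\ell))$. But two of your intermediate steps are wrong, and the second one is where the real content of the lemma lives.

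First, your ``middle factor'' $\pi(k,h-j)\pi(h-j,j)\pi(j,m)$ is incorrect: $\varphi_x$ leaves $\ell^*(x)=h-k-1$ unchanged, so the three perturbed transitions are $\pi(h-k-1,j)\,\pi(j,m)\,\pi(m,\ell(x+3))$, to be compared against $\pi(h-k-1,k+1)\,\pi(k+1,h-k-1)\,\pi(h-k-1,\ell(x+3))$. Second, and more seriously, your treatment of what happens at and beyond $x+3$ is confused. Since $\varphi_x$ fixes $\ell^*(x+3)=\ell(x+3)$, the Ray--Knight block for $y\ge x+3$ is launched from the \emph{unchanged} state $\ell(x+3)$, not from $m$; the ``right factor'' probability is literally identical for $\ell^*$ and $\ell$. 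The only thing that changes to the right of $x+2$ is the single transition $\pi(m,\ell(x+3))$ versus $\pi(h-k-1,\ell(x+3))$, and these are transitions into a \emph{fixed} second coordinate. Your appeal to stochastic monotonicity of the chain (``smaller initial state stochastically dominated, hence larger probability of staying below a fixed level'') does not apply: $B_x(\ell^*)$ conditions on the exact downcrossing field, not on a ``stay below $h$'' event, so there is no survival probability to compare. What is actually needed is Lemma~\ref{tranprob}(iii), the monotonicity $\pi(i_1,j)>\pi(i_2,j)$ for $j<i_1<i_2$, applied with $i_1=m<i_2=h-k-1$ and $j=\ell(x+3)$, together with Lemma~\ref{tranprob}(i) for the other two transitions. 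You never invoke part (iii), and your substitute for it would not close the argument. Everything else (disjointness via Lemma~\ref{disjoint}, the $h$-count of perturbations, achievability of $\ell^*$) is fine.
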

\begin{proof}
 We consider $\ell^*\in \varphi_x(\ell)$ such that $\ell^* (x+1)\in [k+1-\sqrt{h}, k+1)$ and $\ell^*(x+2)\in [h-k-1-\sqrt{h}, h-k-1)$.
According to Lemma \ref{tranprob} (i) and (iii), there is a  constant $c>0$ such that
\begin{align*}
\frac{\P(B_x(\ell^*))}{\P(B_x(\ell))}&= \frac{\pi(\ell^*(x), \ell^*(x+1) )  \pi(\ell^*(x+1),\ell^*(x+2))  \pi(\ell^*(x+2), \ell(x+3))}{\pi(h-k-1, k+1) \pi(k+1,h-k-1)\pi(h-k-1, \ell(x+3))}\\
&\geq c.
\end{align*}
Note that there are about $h$ of such $\ell^*\in \varphi_x(\ell)$ that satisfy the inequality. By Lemma \ref{disjoint}, we get that $\P(B_x(\varphi_x(\ell))) \geq c h \P(B_x(\ell))$.
\end{proof}

\begin{proposition}\label{ENH2}
We have that
$
\E N_H^2 = O(\log H) \cdot \E N_H $.
\end{proposition}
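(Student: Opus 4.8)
The plan is to expand $\E N_H^2$ over pairs of instances and to exhibit the near‑independence of two three‑favorite instances that are spatially apart, the correlation being governed by Lemmas~\ref{disjoint} and~\ref{fluc}. Since the indicators are $0$--$1$ and two distinct instances $A_{x,h}^{(k)}$, $A_{x',h'}^{(k')}$ necessarily occur at different times (at a fixed time $t$ the set $\K(t)$ together with $\max_z L(t,z)$ determines the index), the diagonal contributes exactly $\E N_H$ and
\[
\E N_H^2=\E N_H+2\sum_{(x_1,h_1,k_1)\neq(x_2,h_2,k_2)}\P\big(A_{x_1,h_1}^{(k_1)}\text{ before }A_{x_2,h_2}^{(k_2)}\big),
\]
where ``$A$ before $A'$'' means that both occur and $A$ occurs first. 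On this event $D(t,\cdot)$ is nondecreasing between the two occurrence times, so $h_1\le h_2$, and, writing $A_{x_i,h_i}^{(k_i)}=\bigsqcup_{\ell_i\in\mathcal P_{x_i,h_i}^{(k_i)}}B_{x_i}(\ell_i)$, any realizing pair satisfies $\ell_1\le\ell_2$ coordinatewise. It suffices to show the ordered sum is $O((\log H)^2)$, since then, using $\E N_H\ge c\log H$ from Proposition~\ref{ENH}, $\E N_H^2=\E N_H+O((\log H)^2)=O(\log H)\cdot\E N_H$.

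The heart of the matter is a factorization plus a perturbation gain. By the strong Markov property at the unique time of $B_{x_1}(\ell_1)$,
\[
\P\big(B_{x_1}(\ell_1)\text{ before }B_{x_2}(\ell_2)\big)=\P\big(B_{x_1}(\ell_1)\big)\,g\big(x_1,x_2;\ell_2-\ell_1\big),
\]
where $g(x_1,x_2;m)$ is the probability that a random walk started at $x_1$ ever produces the \emph{additional} downcrossing field $m$ while upcrossing $(x_2-1,x_2)$; crucially $g$ depends on $\ell_1,\ell_2$ only through $m=\ell_2-\ell_1$. Hence for $\ell_1^*\in\varphi_{x_1}(\ell_1)$ and $\ell_2^*:=\ell_2+(\ell_1^*-\ell_1)$ the displacement is unchanged, so summing over $\ell_1^*$ and invoking Lemma~\ref{disjoint} and Lemma~\ref{fluc} (with $h_1=\ell_1(x_1-1)+\ell_1(x_1)+1$, $k_1\in I_{h_1}$),
\[
\P\big(B_{x_1}(\ell_1)\text{ before }B_{x_2}(\ell_2)\big)\le\frac1{c h_1}\sum_{\ell_1^*\in\varphi_{x_1}(\ell_1)}\P\big(B_{x_1}(\ell_1^*)\text{ before }B_{x_2}(\ell_2^*)\big).
\]
Now sum over $\ell_1\in\mathcal P_{x_1,h_1}^{(k_1)}$, $\ell_2\in\mathcal P_{x_2,h_2}^{(k_2)}$. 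For fixed indices the map $(\ell_1,\ell_2,\ell_1^*)\mapsto(\ell_1^*,\ell_2^*)$ is injective ($\ell_1$ is recovered from $\ell_1^*$ by resetting coordinates $x_1+1,x_1+2$ to $k_1+1,h_1-k_1-1$); provided $\{x_1+1,x_1+2\}$ is disjoint from $\{x_2-1,\dots,x_2+3\}$ one has $\ell_2^*\in\mathcal P_{x_2,h_2}^{(k_2)}$ (the perturbation only lowers local times away from the triple of $x_2$); and every such $\ell_1^*$ occurs at time $T_U(k_1+1,x_1)$ with $\K=\{x_1\}$ and $L(x_1)=h_1$, so the events $B_{x_1}(\ell_1^*)$ are pairwise disjoint. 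Consequently, for all such ``separated'' pairs,
\[
\P\big(A_{x_1,h_1}^{(k_1)}\text{ before }A_{x_2,h_2}^{(k_2)}\big)\le\frac1{c h_1}\,\P\big(A'_{x_1,h_1,k_1}\text{ before }A_{x_2,h_2}^{(k_2)}\big),
\]
where $A'_{x,h,k}$ is the event that at time $T_U(k+1,x)$ the walk has the single favorite site $x$, with local time $h$.

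To conclude, apply the strong Markov property at the stopping time $T_U(k_1+1,x_1)$:
\[
\sum_{x_2,h_2,k_2}\P\big(A'_{x_1,h_1,k_1}\text{ before }A_{x_2,h_2}^{(k_2)}\big)=\P\big(A'_{x_1,h_1,k_1}\big)\,\E\!\left[\widehat N_H\,\big|\,\text{base}=D(T_U(k_1+1,x_1))\right],
\]
where $\widehat N_H$ counts the three‑favorite instances of the future walk superposed on the base field. Granting (a)~$\E[\widehat N_H\mid\text{base}]=O(\log H)$ uniformly over base fields arising on some $A'$, (b)~a Ray--Knight computation parallel to Proposition~\ref{ENH} giving $\sum_{x\ge1,k\in I_h}\P(A'_{x,h,k})=O(1)$ — intuitively $A'$ is larger than $A_{x,h}^{(k)}$ by a factor of order $h$, as it trades the two scale‑$h^{-1/2}$ constraints $Y_1^{(k)}=k+1$, $Y_2^{(k)}=h-k-1$ for the order‑one event $\tY_1^{(k)}<h$, while $\sum_{x,k\in I_h}\P(A_{x,h}^{(k)})=O(h^{-1})$ — the separated part of the ordered sum is
\[
\le\sum_{h_1\le H}\frac{O(\log H)}{c h_1}\sum_{x_1\ge1,k_1\in I_{h_1}}\P\big(A'_{x_1,h_1,k_1}\big)=\sum_{h_1\le H}\frac{O(\log H)}{h_1}=O\big((\log H)^2\big).
\]
For each $x_1$ only $O(1)$ values of $x_2$ are non‑separated; for those, the strong Markov split applied directly to $A_{x_1,h_1}^{(k_1)}$, together with the bound $O(\log H)$ on the expected number of future instances at the single site $x_2$, contributes $O(\log H)\cdot\E N_H$ overall. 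Hence $\E N_H^2=\E N_H+O((\log H)^2)=O(\log H)\cdot\E N_H$.

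The main obstacle is step~(a). A priori the local‑time head start left by the first instance could make later three‑favorite instances both cheap and plentiful; the point is that on $A'_{x_1,h_1,k_1}$ the base is exactly the profile of a \emph{single} favorite at level $h_1$ and is bounded by $h_1$ everywhere, so in order to create a fresh favorite triple at any level $h_2\ge h_1$ the walk must first drag the near‑maximal region around $x_1$ up to level $h_2$, which would produce spurious favorites there and hence prevent accumulation. Turning this into a quantitative bound means redoing the first‑moment computation of Proposition~\ref{ENH} with the branching processes started from the base‑shifted states, using the monotonicity in Lemma~\ref{tranprob}(iii) and the overshoot bounds of Lemma~\ref{overshooting} to ensure no factor of $h_1$ is lost; this, rather than the combinatorics above, is where the genuine work lies.
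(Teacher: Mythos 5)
Your combinatorial setup --- expanding $\E N_H^2$ over ordered pairs, applying the perturbation map $\varphi_{x_1}$ to the earlier (lower-level) instance, and invoking Lemmas~\ref{disjoint} and~\ref{fluc} to gain a factor $O(h_1^{-1})$ --- is correct and is exactly the paper's central mechanism. The gap is in how you aggregate afterward. You collapse the union $\bigsqcup_{\ell_1,\ell_1^*}B_{x_1}(\ell_1^*)$ into the single-favorite event $A'_{x_1,h_1,k_1}$ and hold the \emph{first} instance's indices $(x_1,h_1,k_1)$ fixed, which forces you to estimate a conditional future count $\E\bigl[\widehat N_H\mid\text{base}\bigr]$ uniformly over realizable base fields. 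You flag this, your step~(a), as unproved and as ``the genuine work''; it really is a genuine missing piece. It is not clear that the bound even holds uniformly in the required sense, since the base profile the first instance leaves behind is close to level~$h_1$ over a nontrivial region and can make later near-ties plentiful; and proving it would essentially require re-running the whole first-moment analysis with shifted initial conditions. Steps~(b) and the non-separated case's $O(\log H)$ bound are likewise only sketched.

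The paper avoids~(a) entirely by aggregating in the opposite direction. After replacing $\P(B_{x_1}(\ell_1))$ by $O(1)\,h_1^{-1}\,\P\bigl(B_{x_1}(\varphi_{x_1}(\ell_1))\bigr)$ and factoring via the strong Markov property, it uses that the pairwise-disjoint events $B_{x_1}(\ell_1^*)\cap B_{x_2}(\ell_1^*+\tilde\ell)$ (over all $k_1\in I_{h_1}$, $x_1$ with $|x_1-x_2|>3$, $\ell_1$, $\ell_1^*\in\varphi_{x_1}(\ell_1)$, $\tilde\ell$) are all subsets of the \emph{later} event $A_{x_2,h_2}^{(k_2)}$, because $\ell_1^*+\tilde\ell\in\mathcal P_{x_2,h_2}^{(k_2)}$ when the perturbation is spatially separated from $x_2$. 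Summing these disjoint events therefore gives $\le O(1)\,h_1^{-1}\,\P\bigl(A_{x_2,h_2}^{(k_2)}\bigr)$, a known quantity; summing $h_1^{-1}$ over $h_1<h_2$ then yields $\mathrm{I}\le O(\log H)\,\E N_H$ with no conditional first-moment estimate needed at all. For the non-separated case, the paper bounds $\P\bigl(\exists k':A_{x',h'}^{(k')}\mid B_x(\ell)\bigr)\le O(1)/(h'-h)$ via Lemma~\ref{tranprob}(ii) applied to the two forced transitions at $x'+1,x'+2$, and sums over $h'>h$ to get $O(\log H)$. You should redirect your separated-pair bound so that the disjoint union is absorbed into $\P\bigl(A_{x_2,h_2}^{(k_2)}\bigr)$ rather than into $\P\bigl(A'_{x_1,h_1,k_1}\bigr)$; that closes the argument without steps~(a) or~(b).
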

\begin{proof}
We decompose the second moment into the following three parts:
\begin{align}\label{NH2ini}
\E N_H^2 & = 2 \sum_{1\leq h<h'\leq H}  \sum_{k\in I_h} \sum_{k'\in I_{h'}} \sum_{x=1}^\infty \sum_{x'=1}^\infty \P\left(A_{x,h}^{(k)}, ~A_{x',h'}^{(k')} \right)+ \E N_H \nonumber \\
& \leq O(1)\cdot (\I+\II+\E N_H)\,,
\end{align}
where
\begin{align*}
\I=& \sum_{1\leq h<h'\leq H} \sum_{k'\in I_{h'}}  \sum_{k\in I_h} \sum_{|x'-x|> 3} \P\left(A_{x,h}^{(k)}, ~A_{x',h'}^{(k')} \right) \,,\\
 \II= &\sum_{1\leq h<h'\leq H} \sum_{k\in I_h} \sum_{k'\in I_{h'}} \sum_{|x'-x|\leq 3} \P\left(A_{x,h}^{(k)}, ~A_{x',h'}^{(k')} \right) \,.
\end{align*}
First we estimate $\I$.  By the Strong Markov Property,
\begin{align*}
\P(A_{x,h}^{(k)}, A_{x',h'}^{(k')}) = & 
\sum_{\ell \in \mathcal P_{x,h}^{(k)}} \sum_{\ell' \in \mathcal P_{x',h'}^{(k')}} \P\left( B_x(\ell),B_{x'}(\ell') \right) \\
= &\sum_{\ell \in \mathcal P_{x,h}^{(k)}} \sum_{\tilde \ell:~\ell+\tilde \ell\in \mathcal P_{x',h'}^{(k')}}
\P^0 \left( B_x(\ell) \right) \cdot \P^x( B_{x'}(\tilde \ell ))\,,
\end{align*}
where the $x$ in $\P^x$ indicates the starting point of the random walk. For any $x'\in Z_+$ and $k'\in I_{h'}$, using Lemma \ref{fluc}, we get
\begin{align*}
&\sum_{k \in I_h}\sum_{x:|x-x'|> 3} \P\left(A_{x,h}^{(k)}, ~A_{x',h'}^{(k')} \right) \\
=&
 \sum_{k \in I_h}\sum_{x:|x-x'|> 3} \sum_{\ell \in \mathcal P_{x,h}^{(k)}~} \sum_{\tilde \ell:~\ell+\tilde \ell\in \mathcal P_{x',h'}^{(k')}} \P^0( B_x(\ell)) \cdot \P^x ( B_{x'}(\tilde \ell) ) \\
\leq & \sum_{k \in I_h}\sum_{x:|x-x'|> 3} \sum_{\ell\in \mathcal P_{x,h}^{(k)}~} \sum_{\tilde \ell: ~\ell+\tilde \ell\in \mathcal P_{x',h'}^{(k')}} O(1)h^{-1}~\P^0\left( B_x(\varphi_x(\ell)) \right)\cdot \P^x(B_{x'}(\tilde \ell)) \\
\leq & ~ O(1)h^{-1}~ \sum_{k \in I_h} \sum_{x:|x-x'|> 3} \sum_{\ell\in \mathcal P_{x,h}^{(k)}}
\sum_{\ell^* \in \varphi_x(\ell)~} \sum_{\tilde \ell:~\ell+\tilde \ell\in \mathcal P_{x',h'}^{(k')}} \P(B_x(\ell^*),~B_{x'}(\ell^*+\tilde \ell))   \,.
\end{align*}
The last inequality follows from Lemma \ref{disjoint}  and Strong Markov Property. By Lemma \ref{disjoint}, all events $B_x(\ell^*)$ for $x\in \N$, $\ell^* \in \varphi_x(\ell)$, $k\in I_h$ and $\ell \in \mathcal P_{x, h}^{(k)}$ are disjoint. Note that $|x-x'|> 3$ and $\varphi_x$ only reduces the downcrossing number at $x+1$, $x+2$. So $\ell^*+\tilde \ell\in \mathcal P_{x',h'}^{(k')}$. Hence we have
\begin{align*}
\sum_{k \in I_h}\sum_{x:|x-x'|> 3} & \P\left(A_{x,h}^{(k)}, ~A_{x',h'}^{(k')} \right) \leq O(1) h^{-1} \mbox{$\sum_{\ell' \in \mathcal P_{x',h'}^{(k')}} $} \P\left( B_{x'}(\ell') \right)\,.
\end{align*}
As a result, we obtain that
\begin{align}\label{Iest}
\I \leq & O(1)~\sum_{1\leq h<h'\leq H} h^{-1} \sum_{k'\in I_{h'}} \sum_{x'=1}^\infty \P\left( A_{x',h'}^{(k')} \right) \nonumber \\
\leq & O(1)~\left( \sum_{h=1}^H h^{-1}\right) \left( \sum_{h'=1}^H \sum_{k'\in I_{h'}} \sum_{x'=1}^\infty \P\left( A_{x',h'}^{(k')} \right) \right) = O(1) \log H \cdot \E N_H \,.
\end{align}

It remains to estimate $\II$. In the case where the locations for favorite sites have overlap, we do have strong correlation between the two events. However, due to the overlap of locations for favorite sites the enumeration is hugely reduced. As a result the contribution to the second moment in this case can also be controlled, as we show in what follows.

Since $A_{x',h'}^{(k_1')}\cap A_{x',h'}^{(k_2')}=\emptyset$ for $k_1\neq k_2$, we have
\begin{align*}
\II \leq &~ \sum_{x=1}^\infty \sum_{h=1}^H \sum_{k\in I_h} \sum_{h'=h+1}^H
  7\cdot \sup_{x':~|x'-x|\leq 3} \P\left(A_{x,h}^{(k)}, ~ \{ \exists k' :~ A_{x',h'}^{(k')} \} \right)\,. \nonumber
\end{align*}
Note $\P\left(A_{x,h}^{(k)}, ~ \{ \exists k':~ A_{x',h'}^{(k')} \} \right)= \sum_{\ell \in \mathcal P_{x,h}^{(k)}} \P(B_x(\ell))\cdot \P\left( \exists k':~ A_{x',h'}^{(k')} \big|~B_x(\ell) \right)$. Conditioned on $B_x(\ell)$, in order for the event $ \left\{ \exists k':~ A_{x',h'}^{(k')} \right\}$ to occur, we must have:
\begin{enumerate}[(1)]
\item There exists a $k'\geq \ell(x')$ such that at some time $t$, $S(t-1)=x'-1$, $S(t)=x'$ and $D(t,x'-1)=k'$, $D(t,x')=h'-k'-1$ (if such $k'$ exists, it is unique). \label{condition1}
\item Once (1) happens, both $t$ and $k'$ are determined. The additional process after $B_x(\ell)$ need to satisfy: $D(t,x'+1)-\ell(x'+1)=h'-k'-1-\ell(x'+1)$ and $D(t,x'+2)-\ell(x'+2)=k'+1-\ell(x'+2)$.  \label{condition2}
\item $L(t,y) < h'$ for all $y\neq x',x'+1,x'+2$.  \label{condition3}
\end{enumerate}
We omit the probability loss for \eqref{condition1} and \eqref{condition3} and only consider the probability for \eqref{condition2}.
Formally, define $T$ to be the time $t$ such that $S(t-1)=x'-1,~S(t)=x',~D(t,x'-1)+D(t,x')=h'-1$.  Then, we have $\P( \exists k':~ A_{x',h'}^{(k')} \big|~B_x(\ell) )$ is less equal to
\begin{align*}
 \sum_{k'=\ell(x')}^{h'} \P\big( & T=T_U(k'+1,x'),~ D(T,x')=h'-k'-1, \\
& D(T,x'+1)=k'+1,~D(T,x'+2)=h'-k'-1 \big) \,.
\end{align*}
Using the Ray-Knight representation for the random walk started at $x$ after $B_x(\ell)$, we have $\P \left( \exists k':~ A_{x',h'}^{(k')} \big|~B_x(\ell) \right)$ is less equal to
\begin{align*}
 \sum_{k'=\ell(x')}^{h'} \P\big( &T=T_U(k'+1,x'), D(T,x')=h'-k'-1) \big) \\
 &\times \pi^*( h'-k'-1-\ell(x'),k'+1-\ell(x'+1)) \\representation
 &\times \pi^*(k'+1-\ell(x'+1),h'-k'-1-\ell(x'+2)) \,.
\end{align*}
where $\pi^*(\cdot,\cdot)$ is either $\pi(\cdot,\cdot)$ or $\rho^*(\cdot,\cdot)$ depending on the relative position of $x$ and $x'$ (see \eqref{RKdcr} and \eqref{RKdcr2}). Since both $(h'-k'-1-\ell_x(x'))+(k'+1-\ell(x'+1))$ and $(k'+1-\ell(x'+1))+(h'-k'-1-\ell(x'+2))$ are greater than or equal to $h'-h$, by Lemma \ref{tranprob} (ii) and the relation $\rho^*(i,j)=\pi(i,j-1)$, we see that
$$\pi^*( h'-k'-1-\ell(x'), k'+1-\ell(x'+1))\cdot \pi^*(k'+1-\ell(x'+1), h'-k'-1-\ell(x'+2))$$
is at most $\frac{O(1)}{h'-h}$ for any $\ell(x')\leq k'\leq h'$. Therefore,
\begin{align*}
&\P \left( \exists k':~ A_{x',h'}^{(k')} \big|~B_x(\ell) \right) \\
\leq & \sum_{k'=\ell(x')}^{h'} \P\big( T=T_U(k'+1,x'),~ D(T,x')=h'-k'-1) \big) \cdot
\frac{O(1)}{h'-h} \\
=& \P\left(\exists k': T=T_U(k'+1,x'),~ D(T,x')=h'-k'-1) \right) \cdot \frac{O(1)}{h'-h} \,,
\end{align*}
which is bounded by $\frac{O(1)}{h'-h}$. As a  consequence, we get that
\begin{align*}
\P\left(A_{x,h}^{(k)}, ~ \{ \exists k':~ A_{x',h'}^{(k')} \} \right) &\leq \sum_{\ell \in \mathcal P_{x,h}^{(k)}} \P(B_x(\ell))\cdot \frac{O(1)}{h'-h} \\
& = \frac{O(1)}{h'-h} \cdot \P\left( A_{x,h}^{(k)} \right)
\end{align*}
and thus
\begin{align}\label{IIest}
\II \leq & \sum_{x=1}^\infty \sum_{h=1}^H \sum_{k\in I_h} \sum_{h'=h+1}^H \frac{O(1)}{h'-h} \cdot \P\left( A_{x,h}^{(k)} \right) \\
\leq &  O( \log H)  \sum_{h=1}^H \sum_{k\in I_h} \sum_{x=1}^\infty \P(A_{x,h}^{(k)}) = O( \log H)  \E N_H   \,.
\end{align}
Combining \eqref{NH2ini}, \eqref{Iest} and \eqref{IIest}, we get that $\E N_H^2=  O( \log H ) \E N_H$.
\end{proof}
We are now ready to show that $N = \infty$ with positive probabiity.
\begin{proposition}\label{secmoment}
There exists a constant $\delta>0$ such that $\P(N =\infty) \geq \delta $ where $N=\llim{H} N_H$.
\end{proposition}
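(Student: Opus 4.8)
The plan is to deduce $\P(N=\infty)\geq\delta$ from the first- and second-moment estimates (Propositions~\ref{ENH} and~\ref{ENH2}) via the Paley--Zygmund inequality. Recall that for a nonnegative random variable $W$ with $0<\E W<\infty$,
\[
\P(W > 0) \geq \P\big(W > \tfrac12 \E W\big) \geq \frac{(\E W)^2}{4\,\E W^2}\,.
\]
Applying this to $W = N_H$ and using Proposition~\ref{ENH2}, which gives $\E N_H^2 = O(\log H)\cdot \E N_H$, together with Proposition~\ref{ENH}, which gives $\E N_H \geq c\log H$, we obtain
\[
\P(N_H > 0) \geq \frac{(\E N_H)^2}{4\,\E N_H^2} \geq \frac{(\E N_H)^2}{O(\log H)\cdot \E N_H} = \frac{\E N_H}{O(\log H)} \geq \frac{c\log H}{O(\log H)} = \delta
\]
for some constant $\delta>0$ that does not depend on $H$.

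**Next I would** upgrade this lower bound on $\P(N_H>0)$ to a lower bound on $\P(N=\infty)$. The naive step $\P(N>0) \geq \limsup_H \P(N_H > 0) \geq \delta$ only yields $\P(N>0)\geq \delta$, not $\P(N=\infty)\geq\delta$. To get the stronger conclusion, I would instead use Paley--Zygmund more carefully: the inequality shows $\P(N_H > \tfrac12 \E N_H) \geq \delta$ for all $H$, and since $\E N_H \to \infty$ (as $\E N_H \geq c\log H$), for any fixed threshold $m$ we have $\tfrac12\E N_H > m$ for all large $H$, hence $\P(N_H > m)\geq\delta$ for all large $H$. Since $N_H \uparrow N$ monotonically, $\{N_H > m\}\subseteq\{N > m\}$, so $\P(N > m)\geq\delta$ for every $m$; letting $m\to\infty$ and using continuity of measure from above on the decreasing sequence of events $\{N>m\}$ gives $\P(N=\infty) = \lim_m \P(N > m) \geq \delta$.

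**The main obstacle** is essentially bookkeeping rather than a genuine mathematical difficulty: one must confirm that the constant $\delta$ extracted from Paley--Zygmund is uniform in $H$, which requires that the $O(\cdot)$ in Proposition~\ref{ENH2} and the constant $c$ in Proposition~\ref{ENH} are genuinely $H$-independent — they are, by inspection of those proofs. One small point worth stating explicitly is that $\E N_H^2 < \infty$ for each fixed $H$ so that Paley--Zygmund applies; this is clear since $N_H$ is a finite sum ($h\leq H$, and for each $h$ the index $k$ ranges over the finite set $I_h$) of indicators of events $A_{x,h}^{(k)}$ which, for fixed $h$ and $k$, are disjoint over $x$, so $\sum_x \1_{A_{x,h}^{(k)}} \leq 1$ and hence $N_H \leq \sum_{h\le H}\#I_h < \infty$ deterministically. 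With all constants confirmed uniform, the argument above completes the proof that $\P(N=\infty)\geq\delta$.
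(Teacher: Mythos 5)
Your proof is correct and takes essentially the same approach as the paper: the paper derives the bound $\P(N_H>\log\log H)\geq\delta$ via a direct Cauchy--Schwarz computation, which is the same second-moment idea as your Paley--Zygmund application (with threshold $\tfrac12\E N_H\sim\log H$ instead of $\log\log H$), and then passes to $H\to\infty$ exactly as you describe. Your write-up is slightly more explicit about the limiting step ($N_H\uparrow N$, then $\P(N>m)\geq\delta$ for all $m$), which the paper compresses into "sending $H\to\infty$."
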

\begin{proof}
By Cauchy-Schwarz inequality, we get that
\begin{align*}
\E N_H & =\E N_H \1_{\{N_H> \log \log H \}} + \E N_H \1_{\{N_H\leq \log\log H \}}\\
&\leq \sqrt{ \E N_H^2 \cdot \P(N_H>\log\log H) } + \log\log H\,.
\end{align*}
By Propositions \ref{ENH} and \ref{ENH2}, there exist constants $c, \delta>0$ such that
\begin{align*}
\P(N_H>\log\log H) \geq \frac{ (\E N_H- \log\log H )^2 }{ \E N_H^2 }  \geq c \frac{ \E N_H \log H}{\E N_H^2} \geq \delta \,,
\end{align*}
for all sufficiently large $H$. Sending $H\rightarrow \infty$,  we get that $\P(N=\infty)\geq \delta$.
\end{proof}

\subsection{0-1 Law} \label{sec:01law}

In this section, building on Proposition~\ref{secmoment} we show that $N = \infty$ occurs with probability 1. There are a few possible approaches, and here we choose to prove a 0-1 law taking advantage of the result on the transience of favorite sites.
Let $V(t)$ be an arbitrary element in $\K(t)$. It was shown in \cite{BG85} that  uniformly in all $V(t)\in \K(t)$ we have with probability 1
\begin{equation}\label{transience}
\liminf_{t\rightarrow \infty} \frac{|V(t)|}{t^{\frac{1}{2}} (\log t)^{-11}}= \infty\,.
\end{equation}
Denote $\psi(t)=t^{\frac{1}{2}}(\log t)^{-11}$ and $E=\left\{ \lliminf{t} |V(t)| \geq  \psi (t)\right\}$. By \eqref{transience}, we have $\P(E)=1$, and thus without loss of generality we can assume that $E$ occurs in what follows. Our goal is to show that the event $\{f(3) = \infty\}$ is a tail event and it suffices to show that the event $\{f(3) = \infty\}$ is independent of any $\sigma$-field $F_m$  (which is the $\sigma$-field generated by the first $m$ steps of the random walk) for all $m\in \N$.
To this end, for each $m\in \N$ we let $M$ be the first time such that for all $t \geq M$ favorite sites occurs outside of $[-2m, 2m]$. We see that $M$ is not necessarily a stopping time but $M<\infty$ with probability 1.  Therefore, the event $\{f(3)=\infty\}$ depends only on whether after $M$  three favorite sites occurs infinitely often. Now consider the event $\{f_m(3) = \infty\}$ where $f_m(3)$ is defined analogously to $f(3)$ but for the random walk started at time $m$. We claim that the symmetric difference between $\{f(3) = \infty\}$ and $\{f_m(3) = \infty\}$ has probability zero since in the symmetric difference one must have a favorite site (for the original random walk) in the interval $[-2m, 2m]$ after $M$. Therefore, the event $\{f(3) = \infty\}$ is independent of $F_m$ for all $m\in \N$ and thus is a tail event. By Kolmogorov's 0-1 law, $\P( f(3)=\infty ) \in \{0,1\}$. Combined with Proposition~\ref{secmoment}, it completes the proof of \eqref{eq-main-thm}.


\begin{thebibliography}{10}

\bibitem{Abe15}
Y.~Abe.
\newblock Maximum and minimum of local times for two-dimensional random walk.
\newblock {\em Electron. Commun. Probab.}, 20:no. 22, 14, 2015.

\bibitem{BES00}
R.~F. Bass, N.~Eisenbaum, and Z.~Shi.
\newblock The most visited sites of symmetric stable processes.
\newblock {\em Probab. Theory Related Fields}, 116(3):391--404, 2000.

\bibitem{BG85}
R.~F. Bass and P.~S. Griffin.
\newblock The most visited site of {B}rownian motion and simple random walk.
\newblock {\em Z. Wahrsch. Verw. Gebiete}, 70(3):417--436, 1985.

\bibitem{Belius13}
D.~Belius.
\newblock Gumbel fluctuations for cover times in the discrete torus.
\newblock {\em Probab. Theory Related Fields}, 157(3-4):635--689, 2013.

\bibitem{Belius2016}
D.~Belius and N.~Kistler.
\newblock The subleading order of two dimensional cover times.
\newblock {\em Probability Theory and Related Fields}, pages 1--92, 2016.

\bibitem{CDH16}
D.~Chen, L.~de~Raph\'elis, and Y.~Hu.
\newblock Favorite sites of randomly biased walks on a supercritical
  {G}alton--{W}atson tree.
\newblock arXiv 1611.04497.

\bibitem{CRS00}
E.~Cs{\'a}ki, P.~R{\'e}v{\'e}sz, and Z.~Shi.
\newblock Favourite sites, favourite values and jump sizes for random walk and
  {B}rownian motion.
\newblock {\em Bernoulli}, 6(6):951--975, 2000.

\bibitem{CS98}
E.~Cs{\'a}ki and Z.~Shi.
\newblock Large favourite sites of simple random walk and the {W}iener process.
\newblock {\em Electron. J. Probab.}, 3:no. 14, 31 pp. (electronic), 1998.

\bibitem{Dembo05}
A.~Dembo.
\newblock Favorite points, cover times and fractals.
\newblock In {\em Lectures on probability theory and statistics}, volume 1869
  of {\em Lecture Notes in Math.}, pages 1--101. Springer, Berlin, 2005.

\bibitem{DPRZ01}
A.~Dembo, Y.~Peres, J.~Rosen, and O.~Zeitouni.
\newblock Thick points for planar {B}rownian motion and the {E}rd{\H
  o}s-{T}aylor conjecture on random walk.
\newblock {\em Acta Math.}, 186(2):239--270, 2001.

\bibitem{DPRZ04}
A.~Dembo, Y.~Peres, J.~Rosen, and O.~Zeitouni.
\newblock Cover times for {B}rownian motion and random walks in two dimensions.
\newblock {\em Ann. of Math. (2)}, 160(2):433--464, 2004.

\bibitem{Eis97}
N.~Eisenbaum.
\newblock On the most visited sites by a symmetric stable process.
\newblock {\em Probab. Theory Related Fields}, 107(4):527--535, 1997.

\bibitem{EK02}
N.~Eisenbaum and D.~Khoshnevisan.
\newblock On the most visited sites of symmetric {M}arkov processes.
\newblock {\em Stochastic Process. Appl.}, 101(2):241--256, 2002.

\bibitem{ER84}
P.~Erd{\H{o}}s and P.~R{\'e}v{\'e}sz.
\newblock On the favourite points of random walks.
\newblock {\em Mathematical Structures omputational Mathematics athematical
  Modelling (Sofia)}, 2:152--157, 1984.

\bibitem{ER87}
P.~Erd{\H{o}}s and P.~R{\'e}v{\'e}sz.
\newblock Problems and results on random walks.
\newblock In {\em Mathematical statistics and probability theory, {V}ol.\ {B}
  ({B}ad {T}atzmannsdorf, 1986)}, pages 59--65. Reidel, Dordrecht, 1987.

\bibitem{ER91}
P.~Erd{\H{o}}s and P.~R{\'e}v{\'e}sz.
\newblock Three problems on the random walk in {${\bf Z}\sp d$}.
\newblock {\em Studia Sci. Math. Hungar.}, 26(2-3):309--320, 1991.

\bibitem{HS00}
Y.~Hu and Z.~Shi.
\newblock The problem of the most visited site in random environment.
\newblock {\em Probab. Theory Related Fields}, 116(2):273--302, 2000.

\bibitem{HS15}
Y.~Hu and Z.~Shi.
\newblock The most visited sites of biased random walks on trees.
\newblock {\em Electron. J. Probab.}, 20:no. 62, 14, 2015.

\bibitem{knight1963random}
F.~B. Knight.
\newblock Random walks and a sojourn density process of {B}rownian motion.
\newblock {\em Trans. Amer. Math. Soc.}, 109:56--86, 1963.

\bibitem{LS04}
M.~A. Lifshits and Z.~Shi.
\newblock The escape rate of favorite sites of simple random walk and
  {B}rownian motion.
\newblock {\em Ann. Probab.}, 32(1A):129--152, 2004.

\bibitem{Marcus01}
M.~B. Marcus.
\newblock The most visited sites of certain {L}\'evy processes.
\newblock {\em J. Theoret. Probab.}, 14(3):867--885, 2001.

\bibitem{Okada}
I.~Okada.
\newblock Topics and problems on favorite sites of random walks.
\newblock arXiv 1606.03787.

\bibitem{ST00}
Z.~Shi and B.~T{\'o}th.
\newblock Favourite sites of simple random walk.
\newblock {\em Period. Math. Hungar.}, 41(1-2):237--249, 2000.
\newblock Endre Cs{\'a}ki 65.

\bibitem{toth2001no}
B.~T{\'o}th.
\newblock No more than three favorite sites for simple random walk.
\newblock {\em Ann. Probab.}, 29(1):484--503, 2001.

\bibitem{TW97}
B.~T{\'o}th and W.~Werner.
\newblock Tied favourite edges for simple random walk.
\newblock {\em Combin. Probab. Comput.}, 6(3):359--369, 1997.

\end{thebibliography}
\end{document}